\theoremstyle{plain}
\newtheorem{probleme}{Probl\`eme}
\newtheorem{theoreme}{Th\'eor\`eme}
\newtheorem{lemme}[theoreme]{Lemme}
\newtheorem{proposition}[theoreme]{Proposition}
\newtheorem{corollaire}[theoreme]{Corollaire}
\newtheorem{definition}[theoreme]{D\'efinition}
\theoremstyle{definition}
\newtheorem{remarque}[theoreme]{Remarque}
\newcommand{\beq}{\begin{equation}}
\newcommand{\eeq}{\end{equation}}
\DeclareMathOperator{\Div}{Div}
\DeclareMathOperator{\Supp}{Supp}
\DeclareMathOperator{\pgcd}{pgcd}
\DeclareMathOperator{\ev}{ev}
\DeclareMathOperator{\im}{im}
\DeclareMathOperator{\tr}{tr}
\renewcommand{\div}{\textrm{div}\,}
\newcommand{\PP}{{\mathbb{P}}}
\newcommand{\Z}{\mathbb{Z}}
\newcommand{\N}{\mathbb{N}}
\newcommand{\Q}{\mathbb{Q}}
\newcommand{\R}{\mathbb{R}}
\newcommand{\F}{\mathbb{F}}
\newcommand{\ie}{\emph{i.e. }}
\newcommand{\cad}{c'est-\`a-dire }
\newcommand{\longto}{\longrightarrow}
\newcommand{\tens}{\otimes}
\newcommand{\cL}{\mathcal{L}}
\newcommand{\cO}{\mathcal{O}}
\newcommand{\cH}{\mathcal{H}}
\newcommand{\cS}{\mathcal{S}}
\newcommand{\un}{\mathbf{1}}
\newcommand{\equivaut}{\Longleftrightarrow}
\begin{document}
\selectlanguage{francais}


\title{Diviseurs de la forme $2D-G$ sans sections\\ et rang de la multiplication dans les corps finis}


\author{Hugues Randriam}

\maketitle

\begin{abstract}
Soient $X$ une courbe alg\'ebrique, d\'efinie sur un corps parfait,
et $G$ un diviseur sur $X$.
Si $X$ a suffisamment de points,
on montre comment construire un diviseur $D$ sur $X$
tel que $l(2D-G)=0$, pour toute valeur de $\deg D$ telle que
ceci soit compatible avec le th\'eor\`eme de Riemann-Roch
(\ie jusqu'\`a $\deg D=\left\lfloor\frac{1}{2}(g(X)-1+\deg G)\right\rfloor$).
On g\'en\'eralise aussi cette construction au cas d'un nombre fini de
contraintes
$l(k_iD-G_i)=0$, o\`u $|k_i|\leq2$.

Un r\'esultat de ce type avait \'et\'e \'enonc\'e par
Shparlinski-Tsfasman-Vladut, en relation avec la m\'ethode
de Chudnovsky-Chudnovsky pour estimer la complexit\'e bilin\'eaire de la
multiplication dans les corps finis par interpolation
sur les courbes~; malheureusement leur preuve
\'etait erron\'ee.
Ainsi notre travail permet de corriger la preuve de
Shparlinski-Tsfasman-Vladut et montre que leur estimation
$m_q\leq 2\left(1+\frac{1}{A(q)-1}\right)$
est valable, du moins d\`es lors que $A(q)\geq 5-\frac{14q^2-4}{q^4+2q^2-1}$. 
On corrige aussi un \'enonc\'e de Ballet qui souffre du m\^eme
probl\`eme, et on indique enfin quelques autres applications possibles.
\end{abstract}

\selectlanguage{english}
\begin{abstract}
Let $X$ be an algebraic curve, defined over a perfect field,
and $G$ a divisor on $X$.
If $X$ has sufficiently many points,
we show how to construct a divisor $D$ on $X$ such that
$l(2D-G)=0$,
for any value of $\deg D$ such that this is compatible with
the Riemann-Roch theorem
(that is, up to $\deg D=\left\lfloor\frac{1}{2}(g(X)-1+\deg G)\right\rfloor$).
We also generalize this construction to the case of a finite
number of constraints,
$l(k_iD-G_i)=0$, where $|k_i|\leq2$.

Such a result was previously claimed by Shparlinski-Tsfasman-Vladut,
in relation with the Chudnovsky-Chudnovsky method for estimating
the bilinear complexity of the multiplication in finite fields based
on interpolation on curves;
unfortunately, their proof was flawed.
So our work fixes the proof of Shparlinski-Tsfasman-Vladut and shows
that their estimate
$m_q\leq 2\left(1+\frac{1}{A(q)-1}\right)$
holds, at least when $A(q)\geq 5-\frac{14q^2-4}{q^4+2q^2-1}$.
We also fix a statement of Ballet that suffers from the same problem,
and then we point out a few other possible applications.
\end{abstract}
\selectlanguage{francais}

\section*{Introduction}

Plusieurs questions de math\'ematiques discr\`etes
et d'informatique th\'eorique se ram\`enent au probl\`eme suivant~:

\begin{probleme}
\label{pb1}
Soient $X$ une courbe alg\'ebrique sur un corps $K$,
et $r\geq 1$ un entier naturel.
\'Etant donn\'es des entiers relatifs $k_1,\dots,k_r\in\Z$
et des diviseurs $K$-rationnels $G_1,\dots,G_r$ sur $X$, construire
un diviseur $K$-rationnel $D$ sur $X$,
tel que les diviseurs $k_1D-G_1,\dots,k_rD-G_r$ soient sans sections~:
\beq
l(k_1D-G_1)=\cdots=l(k_rD-G_r)=0.
\eeq
\end{probleme}

(Ici, ``courbe'' signifiera toujours~: courbe projective lisse
g\'eom\'etriquement irr\'eductible.)

Citons quelques questions rentrant dans ce cadre
(on supposera en g\'en\'eral que $K$
est un corps fini $\F_q$)~:

\begin{itemize}
\item la construction de codes lin\'eaires intersectants ou,
plus g\'en\'eralement, de codes s\'eparants ou ``frameproof'' (\cite{Xing2002})
\item l'estimation de la complexit\'e bilin\'eaire de la multiplication
dans les corps finis, par interpolation sur les courbes (\cite{ChCh})
\item la construction de syst\`emes de partage de secret lin\'eaires 
avec propri\'et\'e de multiplication (\cite{ChenCramer}).
\end{itemize}

En fait, ces sujets ne sont pas compl\`etement ind\'ependants~:
les liens entre codes intersectants et complexit\'e bilin\'eaire sont
connus de longue date (\cite{LW}), et par ailleurs, l'existence
d'algorithmes de multiplication \`a faible complexit\'e permet parfois,
par des arguments de descente
du corps de base, d'am\'eliorer certaines constructions
de syst\`emes de partage de secret (\cite{CCCX}).

\medskip

Suivant l'approche de \cite{ITW2010}, elle-m\^eme inspir\'ee
de \cite{Xing2002}, le probl\`eme~\ref{pb1} peut se reformuler 
comme suit (voir aussi \cite{Cascudo}).

Notons $g$ le genre de $X$, et
$J$ sa jacobienne.
Supposant que $X$ admette au moins un point $K$-rationnel,
notons $j:X\longto J$ le plongement d'Abel-Jacobi associ\'e,
$W_1=j(X)$ son image, et plus g\'en\'eralement
$0=W_0\subset W_1\subset W_2\subset\cdots\subset W_{g-1}=\Theta\subset W_g=J$
les sommes it\'er\'ees de $W_1$ avec elle-m\^eme, ou de fa\c{c}on
\'equivalente, les images des applications d'Abel-Jacobi sup\'erieures
(et $W_i=\emptyset$ si $i<0$).
Notons aussi $\cH=J(K)$.

\begin{probleme}
\label{pb2}
Avec les notations pr\'ec\'edentes, \'etant donn\'es un entier 
$r\geq 1$, des entiers relatifs $k_1,\dots,k_r$ et
$n_1,\dots,n_r\in\Z$,
et des \'el\'ements $\kappa_1,\dots,\kappa_r\in\cH$,
trouver un entier $d$
tel que
$k_1^{-1}(W_{k_1d-n_1}+\kappa_1)$, $\dots$, $k_r^{-1}(W_{k_rd-n_r}+\kappa_r)$
ne recouvrent pas $\cH$.
\end{probleme}

On passe du probl\`eme~\ref{pb1} au probl\`eme~\ref{pb2} en posant
$n_i=\deg G_i$ et $\kappa_i=j(G_i)$
pour $1\leq i\leq r$, et $d=\deg D$.

Souvent, on demandera que cette construction optimise une certaine
fonction de $d$.
Par exemple, 
le th\'eor\`eme de Riemann-Roch (ou le fait que $W_g=J$)
impose que, si un tel $d$ existe,
il v\'erifie n\'ecessairement
\beq
\label{max<=g-1}
\max(k_1d-n_1,\dots,k_rd-n_r)\leq g-1.
\eeq
On pourra vouloir essayer de maximiser cette derni\`ere quantit\'e.



C'est ce qui est fait dans ce travail~:
on montre que, sous de bonnes conditions,
la borne \eqref{max<=g-1} peut essentiellement \^etre atteinte~;
on aura notamment besoin pour cela de supposer
que $X$ a suffisamment de points.

\medskip

Indiquons maintenant quelques m\'ethodes utilis\'ees pour
attaquer les probl\`emes~\ref{pb1} et~\ref{pb2}.
\begin{enumerate}[(i)]
\item Argument de degr\'e~: si $d$ v\'erifie
$\max(k_1d-n_1,\dots,k_rd-n_r)<0$,
alors $d$ est trivialement solution du probl\`eme~\ref{pb2},
et mieux, n'importe quel diviseur $D$ de degr\'e $d$ est solution
du probl\`eme~\ref{pb1}.
\item Argument de cardinalit\'e~: pour que
les $k_i^{-1}(W_{k_id-n_i}+\kappa_i)$ ne recouvrent pas $\cH$, 
il suffit que leur r\'eunion (ou plus pr\'ecis\'ement,
celle de leurs points rationnels)
soit de cardinalit\'e strictement plus petite.
Il s'agit donc de pouvoir majorer les $|k_i^{-1}(W_{k_id-n_i}+\kappa_i)(K)|$.

Remarquons que le point (i) en est un cas particulier,
car si $k_id-n_i<0$, alors $W_{k_id-n_i}$ est vide. 
Cet argument est utilis\'e notamment dans \cite{ChCh}.

En g\'en\'eral on peut utiliser la majoration
$|k^{-1}(W)(K)|\leq|\cH[k]||W(K)|$, valable pour toute
sous-vari\'et\'e $W$ de $J$ (o\`u $\cH[k]$ est le sous-groupe
de $k$-torsion de $\cH$),
ce qui donne une condition suffisante sur $d$~:
\beq
\label{H[k]W<h}
|\cH[k_1]||W_{k_1d-n_1}(K)|+\cdots+|\cH[k_r]||W_{k_rd-n_r}(K)|<|\cH|.
\eeq
Une telle approche est utilis\'ee dans \cite{STV},
mais avec une erreur~: les auteurs ont oubli\'e la contribution de la torsion. 
Elle est utilis\'ee aussi, de fa\c{c}on correcte, dans \cite{Xing2002},
dans le cas d'une seule contrainte~; et enfin le cas g\'en\'eral
est trait\'e, sous une forme essentiellement identique \`a celle
pr\'esent\'ee ici, dans \cite{Cascudo}
(o\`u l'erreur de \cite{STV} est signal\'ee).

Pour exploiter \eqref{H[k]W<h}, il faut pouvoir estimer
les $|W_n(K)|$ et les $|\cH[k]|$.
Pour le second point, on dispose de la majoration classique
et valable en toute g\'en\'eralit\'e $|\cH[k]|\leq k^{2g}$
(utilis\'ee par exemple dans \cite{Xing2002})~;
mais quitte \`a choisir convenablement la courbe $X$, on peut
vouloir esp\'erer faire mieux.
On s'int\'eressera notamment au cas asymptotique,
o\`u le genre $g$ tend vers l'infini, et o\`u le nombre
de points de $X$ cro{\^\i}t lin\'eairement avec $g$.
On cherche donc des courbes ayant ``beaucoup'' de points
mais dont le groupe de classes ait ``peu'' de $k$-torsion.
Il est naturel d'introduire une constante mesurant la croissance
de cette $k$-torsion par rapport \`a $g$ dans une telle famille
de courbes,
et ceci a \'et\'e fait, ind\'ependamment~:
\begin{itemize}
\item dans \cite{Cascudo}, avec la ``limite de torsion'' $J_k(q,a)$
\item dans \cite{ITW2010}, avec la constante $\delta_k^-(q)$.
\end{itemize}
Ces deux quantit\'es sont reli\'ees par la formule
$J_k(q,A(q))=\delta_k^-(q)\log_qk$.
Dans \cite{ITW2010} on trouve des arguments heuristiques
en faveur de la conjecture
$\delta_k^-(q)=0$.

\item Construction explicite~: il s'agit de construire effectivement
un diviseur $D$ solution du probl\`eme~\ref{pb1} (par contraste avec
la m\'ethode (ii), qui est non-constructive).
Ceci est fait, au moins pour l'application aux codes intersectants, dans \cite{21sep},
et on se propose ici de g\'en\'eraliser et d'am\'eliorer encore ces r\'esultats.
\end{enumerate} 

Notre construction repose sur l'hypoth\`ese que $X$ a suffisamment
de points~; essentiellement, on demande une in\'egalit\'e
du type $\frac{|X(K)|}{g}\geq C(k_1,\dots,k_r)$, pour une constante
$C(k_1,\dots,k_r)$ qu'on essaiera d'estimer aussi pr\'ecis\'ement
que possible, du moins lorsque tous les $k_i$ sont de valeur absolue
au plus $2$ (ce qui suffit pour les applications mentionn\'ees ici).

Si l'on veut utiliser cette m\'ethode dans un cadre asymptotique,
avec $g$ tendant vers l'infini,
on voit qu'il faut supposer que $r$ est fix\'e.
C'est le cas pour les applications aux codes intersectants ($r=1$)
ou \`a la complexit\'e bilin\'eaire de la multiplication ($r=2$).
Il se trouve que
la construction fournit alors un diviseur $D$ dont le degr\'e est
essentiellement celui qu'aurait donn\'e la m\'ethode (ii) sous
l'hypoth\`ese que les $\delta_{k_i}^-(q)$ sont tous nuls~;
et le th\'eor\`eme de Riemann-Roch,
sous la forme d'une version asymptotique de l'in\'egalit\'e \eqref{max<=g-1},
implique qu'on ne peut pas faire mieux.

En revanche, pour les applications au partage de secret,
en g\'en\'eral $r$ cro{\^\i}t avec $g$, ce qui semble interdire
l'usage de notre m\'ethode, du moins sous ses formes les plus na{\"\i}ves.



\section{Formules de Pl\"ucker}
\label{section1}

On donne une variante des formules de Pl\"ucker d'ordre $1$ et $2$
adapt\'ee \`a nos besoins
(on appellera ici formule de Pl\"ucker toute estimation sur le
nombre de points en lesquels la suite des sauts de Weierstrass
d'un diviseur, jusqu'\`a un ordre donn\'e, diff\`ere de son comportement
g\'en\'erique~;
pour une version plus g\'en\'erale, voir par exemple \cite{Laksov}).

Ce premier lemme correspond \`a l'ordre $1$~:

\begin{lemme}
\label{lemme1}
Soient $X$ une courbe de genre $g$ sur un corps $K$,
et $\cS\subset X(K)$ un ensemble de points de $X$.
\begin{enumerate}[a)]
\item
Soit $A$ un diviseur $K$-rationnel sur $X$ tel que
\beq
i(A)=l(A)-(\deg A+1-g)\geq 1.
\eeq
Supposons que pour tout $P\in\cS$ on ait $l(A+P)>l(A)$.
Alors
\beq
|\cS|\leq g-l(A).
\eeq
(Si $\deg A=-1$ on a aussi $|\cS|\leq 1$.)
\item
Soit $B$ un diviseur $K$-rationnel sur $X$ tel que
\beq
l(B)\geq 1.
\eeq
Supposons que pour tout $P\in\cS$ on ait $l(B-P)>l(B)-1$.
Alors
\beq
|\cS|\leq \deg B+1-l(B).
\eeq
(Si $\deg B=2g-1$ on a aussi $|\cS|\leq 1$.)
\end{enumerate}
\end{lemme}
\begin{proof}
C'est un r\'esultat classique~: pour a), voir par exemple
\cite{21sep}, lemme~10~; et b) lui est \'equivalent par
Riemann-Roch (poser $A=\Omega-B$ o\`u $\Omega$ est un diviseur canonique),
mais on peut aussi le prouver directement~: les hypoth\`eses impliquent
que $l(B')=l(B)$, o\`u $B'=B-\sum_{P\in\cS}P$, de sorte que
$l(B)=l(B')\leq 1+\deg B'=1+\deg B-|\cS|$, d'o\`u la conclusion.

(Si $\deg B=2g-1$, on a $g\geq1$ car $l(B)\geq 1$, et par ailleurs
$l(B-P)>l(B)-1$ si et seulement si
$B-P\sim\Omega$. Si on a aussi $l(B-P')>l(B)-1$,
alors $P'\sim P$, donc $P'=P$.)
\end{proof}

\medskip

Le c{\oe}ur technique de l'article est ce second lemme,
qui correspond \`a l'ordre~$2$, et qui raffine le lemme~12 de \cite{21sep}.

Pour tout $q>1$ et pour tout entier $n\geq2$ on pose
\beq
\begin{split}
G_q(n)&=\sum_{k=1}^{n-2}\frac{(q^{n-k}-1)(q^{n-k-1}-1)}{(q^{n}-1)(q^{n-1}-1)}\\
&=\frac{1}{q^2-1}-\frac{1-\frac{(q-1)n}{q^n-1}}{(q-1)(q^{n-1}-1)}
\end{split}
\eeq
(c'est une fonction croissante de $n$,
car chacun des termes dans la somme qui la d\'efinit l'est,
et qui tend vers $\frac{1}{q^2-1}$ \`a l'infini).

Alors~:

\begin{lemme}
\label{lemme2}
Soient $X$ une courbe de genre $g$ sur un corps $K$,
suppos\'e \emph{parfait},
et $\cS\subset X(K)$ un ensemble de points de $X$.
\begin{enumerate}[a)]
\item Soit $A$ un diviseur $K$-rationnel sur $X$ tel que $\deg A\geq -2$ et
\beq
i(A)=l(A)-(\deg A+1-g)\geq 2.
\eeq
Supposons que pour tout $P\in\cS$ on ait $l(A+2P)>l(A)$.
Alors
\beq
\label{miniPlucker2}
|\cS|\;\leq\; 3g+3\; +\; \deg A\;-\;3l(A).
\eeq
Si de plus $K$ est un corps fini $\F_q$,
on a aussi
\beq
|\cS|\leq\left(1+\frac{q^{i(A)-2}-1}{q^{i(A)}-1}\right)^{-1}(6g-6 \; -\;2\deg A\; -\;2\,G_q(i(A))\,|X(\F_q)|\,)
\eeq
et plus g\'en\'eralement, pour tout entier $w$ tel que
$2\leq w\leq i(A)$, 
\beq
\label{Plucker2}
\begin{split}
|\cS|\leq (i(A)-w)+\left(1+\frac{q^{w-2}-1}{q^{w}-1}\right)^{-1}(6g-6 \; -\;2&\deg A\; -\,4(i(A)-w)\;\\
&\;\;\;\;\;\;-\,2\,G_q(w)\,|X(\F_q)|\,).
\end{split}
\eeq 
\item
Soit $B$ un diviseur $K$-rationnel sur $X$ tel que $\deg B\leq 2g$ et
\beq
l(B)\geq 2.
\eeq
Supposons que pour tout $P\in\cS$ on ait $l(B-2P)>l(B)-2$.
Alors
\beq
\label{car_qcq}
|\cS|\;\leq\; 2\deg B\; +\; 2g+4\;-\;3l(B).
\eeq
Si de plus $K$ est un corps fini $\F_q$,
on a aussi
\beq
\label{grosseformulePlucker2facile}
|\cS|\leq\left(1+\frac{q^{l(B)-2}-1}{q^{l(B)}-1}\right)^{-1}(2\deg B \; +\; 2g-2 \; -\;2\,G_q(l(B))\,|X(\F_q)|\,)
\eeq
et plus g\'en\'eralement, pour tout entier $w$ tel que
$2\leq w\leq l(B)$, 
\beq
\label{grosseformulePlucker2}
\begin{split}
|\cS|\leq (l(B)-w)+\left(1+\frac{q^{w-2}-1}{q^{w}-1}\right)^{-1}(2\deg B \; +\; 2g-&2 \; -\,4(l(B)-w)\;\\
&\!\!-\,2\,G_q(w)\,|X(\F_q)|\,).
\end{split}
\eeq 
\end{enumerate}
\end{lemme}
\begin{proof}
Remarquons que a) et b) sont \'equivalents, par Riemann-Roch.
Il suffit donc de prouver b), et pour
ce faire, on proc\`ede en six \'etapes.

\smallskip

\textbf{\'Etape~1. Pr\'eliminaires et notations.}

\smallskip

Par hypoth\`ese, $\deg B\leq 2g$,
et $l(B)\geq 2$ donc $\deg B\geq1$,
ce qui implique, par le th\'eor\`eme de Clifford,
\beq
\label{Clifford}
l(B)\leq 1+\frac{\deg B}{2}.
\eeq

Pour tout $x\in\cL(B)\setminus\{0\}$
on note $E_x=B+\div x\geq 0$.
Si $V$ est un sous-$K$-espace vectoriel non nul de $\cL(B)$,
on d\'efinit son lieu base $E_V$ comme le $\pgcd$
des $E_x$ pour $x\in V\setminus\{0\}$~;
de fa\c{c}on \'equivalente, $E_V$ est le plus grand diviseur
tel que $V\subset\cL(B-E_V)\subset\cL(B)$.
Alors $V$ d\'efinit un morphisme $\phi_V:X\longto\PP^{\dim V-1}$
de degr\'e $\deg B-\deg E_V$.

\smallskip

\textbf{\'Etape~2. R\'eduction au cas s\'eparable.}
On montre le r\'esultat suivant
(voir aussi le d\'ebut de la preuve de \cite{21sep}, lemme~12)~:

\smallskip

\emph{Soit $V\subset\cL(B)$ de dimension $\dim V\geq2$.
Alors il existe $V'\subset\cL(B)$ de dimension $\dim V'=2$
tel que $\phi_{V'}:X\longto\PP^1$
soit s\'eparable et $E_{V'}\geq E_V$.}

\smallskip

En effet, remarquons tout d'abord que
quitte \`a remplacer $V$ par un sous-espace (ce qui ne peut que faire
augmenter $E_V$), on peut supposer $\dim V=2$.

Si maintenant $K$ est de caract\'eristique nulle,
ou plus g\'en\'eralement si $\phi_V$ est d\'ej\`a s\'eparable,
il n'y a rien \`a montrer
(on prend $V'=V$). Supposons donc $K$ de caract\'eristique
$p>0$, et $\phi_V$ de degr\'e d'ins\'eparabilit\'e $p^m>1$.
Choisissons une base $x,y\in V$ et posons $f=y/x\in K(X)$.
Dans l'\'equivalence de cat\'egories entre courbes alg\'ebriques
et corps de fonctions, $\phi_V$ est le morphisme
associ\'e \`a l'inclusion $K(f)\subset K(X)$, 
de degr\'e d'ins\'eparabilit\'e $p^m$. Puisque $K$ est suppos\'e
parfait, cela signifie qu'on peut \'ecrire $f=g^{p^m}$ 
avec $K(g)\subset K(X)$ s\'eparable. 

Par hypoth\`ese $\div x,\div y\geq -B$, donc
\beq
\div gx=\div x+\frac{1}{p^m}\div f=\left(1-\frac{1}{p^m}\right)\div x+\frac{1}{p^m}\div y\geq -B.
\eeq
Ainsi $gx\in\cL(B)$, et on note $V'\subset\cL(B)$
le sous-espace de base $x,gx$.
Alors $\phi_{V'}$ est le morphisme associ\'e \`a l'inclusion de
corps de fonctions $K(g)\subset K(X)$, donc est bien s\'eparable.
Il ne nous reste plus qu'\`a montrer $E_{V'}\geq E_V$.

Par d\'efinition, $E_V=\pgcd(E_x,E_y)$. Posons donc
$E_x=E_V+E_x'$ et $E_y=E_V+E_y'$, avec $E_x',E_y'\geq0$ \'etrangers.
Alors $E_{gx}=E_V+\left(1-\frac{1}{p^m}\right)E_x'+\frac{1}{p^m}E_y'$
donc
\beq
E_{V'}=\pgcd(E_x,E_{gx})=E_V+\left(1-\frac{1}{p^m}\right)E_x'\;\geq\; E_V
\eeq
ce qu'il fallait d\'emontrer.

\smallskip

\textbf{\'Etape~3. Une formule g\'en\'erale.} On montre~:

\smallskip

\emph{Soient $V\subset\cL(B)$ un sous-espace de dimension $\dim V\geq 2$,
et $E$ un diviseur sur $X$ tel que $0\leq E\leq E_V$. Alors
\beq
\label{formgen}
|\cS|\;\leq \; 2\deg B \; +\; 2g-2 \; -\;2\deg E \;+\;|\cS\cap\Supp E|.
\eeq}


Remarquons que la fonction
$E\;\mapsto \; 2\deg B \; +\; 2g-2 \; -\;2\deg E \;+\;|\cS\cap\Supp E|$
est une fonction d\'ecroissante de $E$. Il suffit donc de prouver
\eqref{formgen} lorsque $E=E_V$.
De plus, quitte \`a remplacer $V$ par $V'$ fourni par
l'\'etape 2,
on peut aussi supposer que $\dim V=2$ et que $\phi_V:X\longto\PP^1$
est s\'eparable.

Notons $R$ le diviseur de ramification de $\phi_V$.
Si $P\in\cS\setminus\Supp E_V$ on a $V\not\subset\cL(B-P)$,
donc $l(B-P)=l(B)-1$,
et par ailleurs $l(B-2P)>l(B)-2$, d'o\`u n\'ecessairement
\beq
\label{-2P=-P}
\cL(B-2P)=\cL(B-P).
\eeq
Si maintenant $t\in K(\PP^1)$
est une uniformisante en $\phi_V(P)$, alors $\phi_V^*t$ s'annule en $P$,
et par \eqref{-2P=-P}, son ordre d'annulation est au moins $2$.
Cela signifie que $\phi_V$ est ramifi\'e en $P$, d'o\`u
\beq
\deg R\geq |\cS\setminus\Supp E_V|.
\eeq
Par ailleurs la formule d'Hurwitz donne
\beq
2g-2=-2 \deg\phi_V + \deg R
\eeq
avec $\deg\phi_V=\deg B-\deg E_V$, d'o\`u il d\'ecoule \eqref{formgen}.

\smallskip

\textbf{\'Etape~4. Choix d'un sous-espace et preuve de \eqref{car_qcq}.}

\smallskip

Si $l(B)=2$, on prend $V=\cL(B)$ et $E=0$ dans \eqref{formgen},
ce qui prouve \eqref{car_qcq} dans ce cas particulier.

Si par ailleurs $|\cS|<l(B)-2$, alors \eqref{car_qcq} est cons\'equence
de \eqref{Clifford}.

On peut donc supposer $l(B)\geq 3$ et $|\cS|\geq l(B)-2$.
Notons $P_1,\dots,P_{|\cS|}$ les \'el\'ements de $\cS$,
et posons
\beq
E=2(P_1+\cdots+P_{l(B)-2}).
\eeq
Alors pour tout $i$ on a $l(B-2P_i)\geq l(B)-1$,
donc
\beq
l(B-E)\geq 2.
\eeq
Ainsi en posant $V=\cL(B-E)$ on satisfait aux hypoth\`eses de \eqref{formgen},
avec
\begin{itemize}
\item $\deg E=2(l(B)-2)$
\item $|\cS\cap\Supp E|=l(B)-2$ 
\end{itemize}
ce qui donne bien \eqref{car_qcq}.

\smallskip

\textbf{\'Etape~5. Preuve de \eqref{grosseformulePlucker2facile}.} 

\smallskip


On suppose maintenant $K=\F_q$.

Pour tout sous-espace $V\subset \cL(B)$ de dimension $\dim V=2$,
et pour tout point $P\in X(\F_q)$,
notons $\nu_{P,V}=v_P(E_V)\geq0$ la multiplicit\'e de $P$ dans $E_V$,
et posons
\beq
F_V=\sum_{P\in X(\F_q)}\nu_{P,V}P
\eeq
de sorte que $F_V$ est le plus grand diviseur de $E_V$ support\'e par $X(\F_q)$.
Posons alors
\beq
d_V=2\deg F_V-|\cS\cap\Supp F_V|=\sum_{P\in X(\F_q)}2\nu_{P,V}\;-\;|\{P\in\cS\;|\;\nu_{P,V}\geq1\}|.
\eeq
En remarquant que
\beq
\sum_{P\in X(\F_q)}2\nu_{P,V}=\sum_{P\in X(\F_q)}\left(2\sum_{k\geq1}\un_{\{\nu_{P,V}\geq k\}}\right)
\eeq
et
\beq
\begin{split}
|\{P\in\cS\;|\;\nu_{P,V}\geq1\}|=\sum_{P\in\cS}\un_{\{\nu_{P,V}\geq 1\}}
\end{split}
\eeq
on peut aussi \'ecrire
\beq
\begin{split}
d_V=\sum_{P\in X(\F_q)\setminus\cS}&\left(2\sum_{k\geq1}\un_{\{\nu_{P,V}\geq k\}}\right)\\
&+\sum_{P\in\cS}\left(\un_{\{\nu_{P,V}\geq 1\}}+2\sum_{k\geq2}\un_{\{\nu_{P,V}\geq k\}}\right)
\end{split}
\eeq

On consid\`ere maintenant $d_V$ et les $\nu_{P,V}$ comme des variables
al\'eatoires, en supposant
que $V$ est tir\'e selon
la loi de probabilit\'e uniforme
sur l'ensemble des sous-espaces de dimension $2$ de $\cL(B)$.
La formule pr\'ec\'edente permet alors de calculer l'esp\'erance de $d_V$~:
\beq
\label{EdV}
\begin{split}
\mathbf{E}[d_V]=\sum_{P\in X(\F_q)\setminus\cS}&\left(2\sum_{k\geq1}\mathbf{P}[\nu_{P,V}\geq k]\right)\\
&+\sum_{P\in\cS}\left(\mathbf{P}[\nu_{P,V}\geq 1]+2\sum_{k\geq2}\mathbf{P}[\nu_{P,V}\geq k]\right).
\end{split}
\eeq
Remarquons que, par d\'efinition de $E_V$,
on a l'\'equivalence~:
\beq
\nu_{P,V}\geq k \qquad\equivaut\qquad V\subset\cL(B-kP).
\eeq
Puisque le nombre de sous-espaces de dimension $2$ dans un espace de
dimension $d$ est $\frac{(q^d-1)(q^{d-1}-1)}{(q^2-1)(q-1)}$,
ceci donne
\beq
\begin{split}
\mathbf{P}[\nu_{P,V}\geq k]&=\frac{(q^{l(B-kP)}-1)(q^{l(B-kP)-1}-1)}{(q^{l(B)}-1)(q^{l(B)-1}-1)}\\
&=g_q(\,l(B),\;l(B)-l(B-kP)\,)\phantom{\sum}
\end{split}
\eeq
o\`u la fonction
\beq
g_q(x,y)=\frac{(q^{x-y}-1)(q^{x-y-1}-1)}{(q^{x}-1)(q^{x-1}-1)}
\eeq
est croissante en $x$ 
(pour $x\geq2$) et d\'ecroissante en $y$ (pour $0\leq y<x$).
Pour tout $P\in X(\F_q)$ et tout $k\geq 1$ on a $l(B)-l(B-kP)\leq k$,
de sorte que
\beq
\mathbf{P}[\nu_{P,V}\geq k]\geq g_q(l(B),k),
\eeq
et si en outre $P\in\cS$
et $k\geq2$, on a alors m\^eme $l(B)-l(B-kP)\leq k-1$, d'o\`u
dans ce cas~:
\beq
\mathbf{P}[\nu_{P,V}\geq k]\geq g_q(l(B),k-1).
\eeq
Reportant tout ceci dans \eqref{EdV} on trouve
\beq
\begin{split}
\mathbf{E}[d_V]&\geq(|X(\F_q)|-|\cS|)\sum_{k=1}^{l(B)-2}2\,g_q(l(B),k)\\
&\phantom{\geq(|X(\F_q)|-|\cS}+|\cS|\left(g_q(l(B),1)+\sum_{k=2}^{l(B)-1}2\,g_q(l(B),k-1)\right)\\
&\phantom{\geq(}=|X(\F_q)|\sum_{k=1}^{l(B)-2}2\,g_q(l(B),k)\;+\;|\cS|\,g_q(l(B),1)
\end{split}
\eeq
soit
\beq
\mathbf{E}[d_V]\geq2\,G_q(l(B))\,|X(\F_q)|\;+\;\frac{q^{l(B)-2}-1}{q^{l(B)}-1}\,|\cS|.
\eeq

Puisque cette in\'egalit\'e est vraie en moyenne, il existe
au moins un $V$ pour lequel elle est v\'erifi\'ee.
Choisissons donc un tel $V$, et appliquons \eqref{formgen}
avec $E=F_V$. 
On trouve alors
\beq
\begin{split}
|\cS|\;&\leq \; 2\deg B \; +\; 2g-2 \; -\;d_V\\
&\leq \; 2\deg B \; +\; 2g-2 \; -\;2\,G_q(l(B))\,|X(\F_q)|\;-\;\frac{q^{l(B)-2}-1}{q^{l(B)}-1}\,|\cS|
\end{split}
\eeq
ce qui donne bien
\eqref{grosseformulePlucker2facile}.

\smallskip

\textbf{\'Etape~6. Preuve de \eqref{grosseformulePlucker2}.} 

\smallskip

Soit maintenant $w$ un entier tel que $2\leq w\leq l(B)$.

Commen\c{c}ons par montrer que le second terme de la somme
dans \eqref{grosseformulePlucker2} est positif~:
\beq
2\deg B \, + \, 2g-2 \, -4(l(B)-w)\, - \,2\,G_q(w)\,|X(\F_q)|\;\geq\; 0.
\eeq
En effet, 
par \eqref{Clifford} on a $2\deg B-4l(B)\geq -4$,
par hypoth\`ese on a $w\geq2$ donc $4w\geq8$,
tandis que $G_q(w)\leq\frac{1}{q^2-1}$,
et $|X(\F_q)|\leq q+1+2g\sqrt{q}$ par la borne de Weil.
La quantit\'e qui nous int\'eresse est donc minor\'ee par
$2g+2-2\frac{q+1+2g\sqrt{q}}{q^2-1}=2g(1-\frac{2\sqrt{q}}{q^2-1})+2(1-\frac{1}{q-1})$,
ce qui est bien positif, puisque $q\geq2$.

On en d\'eduit que, si $w$ est choisi de fa\c{c}on que
$|\cS|\leq l(B)-w$, alors \eqref{grosseformulePlucker2} est bien v\'erifi\'ee.

Supposons donc maintenant $|\cS|>l(B)-w$.

Notant toujours $P_1,\dots,P_{|\cS|}$ les \'el\'ements de $\cS$,
on pose
\beq
B'=B-2(P_1+\cdots+P_{l(B)-w})
\eeq
et
\beq
\cS'=\{P_{l(B)-w+1},\dots,P_{|\cS|}\}.
\eeq
On a alors $l(B')\geq w\geq2$, et $l(B'-2P)>l(B')-2$ pour tout $P\in\cS'$.
Appliquant \eqref{grosseformulePlucker2facile} \`a $B'$ et $\cS'$, on trouve
alors
\beq
\begin{split}
|\cS'|&=|\cS|-(l(B)-w)\\
&\leq\left(1+\frac{q^{l(B')-2}-1}{q^{l(B')}-1}\right)^{-1}(2\deg B' \; +\; 2g-2 \; -\;2\,G_q(l(B'))\,|X(\F_q)|\,)\\
&\leq\left(1+\frac{q^{w-2}-1}{q^{w}-1}\right)^{-1}(2\deg B-4l(B)+4w\,+\,2g-2\,-\,2G_q(w)|X(\F_q)|)
\end{split}
\eeq
ce qu'il fallait d\'emontrer.
\end{proof}


\begin{remarque}
\label{remcarpos}
En caract\'eristique nulle on pourrait g\'en\'eraliser ces r\'esultats
\`a un ordre quelconque $s$, en montrant que pour tout diviseur $A$ de degr\'e
$\deg A\geq -s$ tel que $i(A)\geq s$, le nombre de points $P$ tels que
$l(A+sP)>l(A)$ est major\'e essentiellement par $s^2g$.
Ces points sont en effet ceux o\`u un certain wronskien s'annule,
et la majoration est donn\'ee par le degr\'e du faisceau inversible
dont ce wronskien est une section.

Comme expliqu\'e dans la remarque finale de \cite{21sep},
cela n'est pas vrai cependant en caract\'eristique positive
d\`es lors que $s\geq3$. En effet, pour que le raisonnement indiqu\'e
ci-dessus soit valable, il faut s'assurer que ce wronskien n'est pas
identiquement nul. Pour $s=2$, ceci est rendu possible par
l'\'etape~2 de la preuve, qui ne se g\'en\'eralise pas \`a l'ordre
sup\'erieur.

Ceci peut s'interpr\'eter, en termes de th\'eorie des sauts de
Weierstrass, par l'existence, en caract\'eristique positive,
de diviseurs dont la suite des invariants d'Hermite
g\'en\'erique (\cite{Laksov}\cite{SV})
commence bien par $\epsilon_0=0$ et $\epsilon_1=1$
(ce qui \'equivaut au r\'esultat de s\'eparabilit\'e prouv\'e
\`a l'\'etape~2),
mais v\'erifie $\epsilon_{s-1}>s-1$ pour $s\geq3$.
De tels diviseurs sont ``rares'' (\cite{Neeman}), mais pour les
applications qu'on a en vue par la suite, il n'est pas clair qu'on
puisse les \'eviter.
\end{remarque}

\section{Construction de diviseurs ordinaires}
\label{section2}

\begin{definition}
\label{defordinaire}
Soit $X$ une courbe de genre $g\geq1$ sur un corps $K$.
On dit qu'un diviseur $D$ sur $X$ est \emph{ordinaire}
s'il v\'erifie
\beq
l(D)=\max(0,\deg D+1-g),
\eeq
autrement dit, $l(D)=0$ si $\deg D\leq g-1$,
et $l(D)=\deg D+1-g$ si $\deg D\geq g-1$.

Dans le cas contraire on dit que $D$ est \emph{exceptionnel}.
\end{definition}

On remarque que ces propri\'et\'es ne d\'ependent que de la classe
d'\'equivalence lin\'eaire de $D$. De plus, $D$ est ordinaire
(resp. exceptionnel) si et seulement si $\Omega-D$ l'est, o\`u
$\Omega$ est un diviseur canonique.
En fait, $D$ est exceptionnel si et seulement si $D$ et $\Omega-D$
sont tous les deux sp\'eciaux (ce qui implique $0\leq \deg D\leq 2g-2$).
Si $0\leq d\leq g-1$, les classes de diviseurs exceptionnels de degr\'e
$d$ sont param\'etr\'ees par la sous-vari\'et\'e $W_d\subset J$
introduite au d\'ebut de ce texte. 

On s'int\'eresse ici au probl\`eme suivant~:

\begin{probleme}
\label{pb3}
Soient $X$ une courbe alg\'ebrique sur un corps $K$,
et $r\geq 1$ un entier.
\'Etant donn\'es des entiers $s_1,\dots,s_r\geq1$,
des diviseurs $K$-rationnels $T_1,\dots,T_r$ sur $X$,
et un entier $d\in\Z$, construire
un diviseur $K$-rationnel $D$ sur $X$ de degr\'e $\deg D=d$,
tel que les diviseurs $s_1D-T_1,\dots,s_rD-T_r$ soient ordinaires.
\end{probleme}

Indiquons comment le probl\`eme~\ref{pb1} peut se ramener \`a ce
probl\`eme~\ref{pb3}.
Dans le probl\`eme~\ref{pb1}, on se donne des entiers $k_1,\dots,k_r$,
positifs ou n\'egatifs, et on veut trouver un diviseur
$D$ de degr\'e $d$ tel que
\beq
l(k_1D-G_1)=\cdots=l(k_rD-G_r)=0.
\eeq
Une condition n\'ecessaire sur $d$ pour que ce soit possible
est donn\'ee par \eqref{max<=g-1}, et peut se traduire en
\beq
\label{d-<d<d+}
\max_{k_i<0}\left\lceil\frac{g-1+n_i}{k_i}\right\rceil=d^-\;\leq\; d\;\leq\; d^+=\min_{k_i>0}\left\lfloor\frac{g-1+n_i}{k_i}\right\rfloor
\eeq
o\`u $n_i=\deg G_i$.

Ainsi, une condition n\'ecessaire pour que le probl\`eme~\ref{pb1}
soit r\'esoluble est que
\beq
d^-\leq d^+.
\eeq
Supposons cette condition v\'erifi\'ee, et choisissons $d$ 
v\'erifiant \eqref{d-<d<d+}.
Posons~:
\begin{itemize}
\item $s_i=k_i$ et $T_i=G_i$ si $k_i>0$
\item $s_i=-k_i$ et $T_i=-\Omega-G_i$ si $k_i<0$
\end{itemize}
o\`u, comme pr\'ec\'edemment, $\Omega$ est un diviseur canonique sur $X$.

Supposons alors qu'on trouve $D$ de degr\'e $\deg D=d$ solution
du probl\`eme~\ref{pb3} pour ces choix de $s_i$ et $T_i$. Ainsi,
tous les $s_iD-T_i$ sont suppos\'es ordinaires. Alors~:
\begin{itemize}
\item si $k_i>0$, on a $\deg(s_iD-T_i)\leq g-1$ par \eqref{d-<d<d+},
donc $l(s_iD-T_i)=0$, donc $l(k_iD-G_i)=0$
\item si $k_i<0$, on a $\deg(s_iD-T_i)\geq g-1$ par \eqref{d-<d<d+},
donc $l(s_iD-T_i)=\deg(s_iD-T_i)+1-g$, d'o\`u par Riemann-Roch
$l(k_iD-G_i)=0$.
\end{itemize}
Ainsi $D$ est bien solution du probl\`eme~\ref{pb1}.

\medskip

On explique maintenant comment r\'esoudre le probl\`eme~\ref{pb3}
lorsque $K$ est parfait,
que les $s_i$ valent $1$ ou $2$, et que $X$ a suffisamment
de points (on supposera toujours $g\geq1$).

On introduit deux fonctions $f_{1,X}$ et $f_{2,X}$ d\'efinies
sur $\Z$, comme suit.
Tout d'abord,
\beq
f_{1,X}(a)=
\begin{cases}
1 & \textrm{si $a=-1$}\\
g & \textrm{si $0\leq a\leq g-2$}\\
0 & \textrm{sinon.}
\end{cases}
\eeq
La d\'efinition de $f_{2,X}$ est un peu plus compliqu\'ee.
On pose
\beq
f_{2,X}(g-2)=g
\eeq
puis, si $-2\leq a\leq g-3$ et que $K=\F_q$ est un corps fini~:
\beq
\begin{split}
f_{2,X}(a)=\!\!\!\min_{2\leq w\leq g-1-a}\!\lfloor(g\!-\!1\!-\!a\!-\!w)+\!\left(1+\frac{q^{w-2}\!-\!1}{q^{w}-1}\right)^{-1}\!\!\!(2g-&2+2a+4w\\
&\;-\!2G_q(w)|X(\F_q)|)\rfloor
\end{split}
\eeq
ou bien, si $-2\leq a\leq g-3$ et que $K$ est infini~:
\beq
f_{2,X}(a)=3g+3+a,
\eeq
et enfin $f_{2,X}(a)=0$ si $a<-2$ ou $a>g-2$.

\begin{lemme}
\label{recapitulatif}
Avec les notations qui pr\'ec\`edent, soient $A$ un diviseur sur $X$
et $\cS\subset X(K)$ un ensemble de points.
Soit aussi $s\in\{1,2\}$. On suppose que $A$ est ordinaire, mais
que $A+sP$ est exceptionnel pour tout $P\in\cS$.
Alors
\beq
|\cS|\leq f_{s,X}(\deg A).
\eeq
\end{lemme}
\begin{proof}
On pose $a=\deg A$, et on distingue selon que $s=1$ ou $s=2$.
\begin{enumerate}[(i)]
\item Supposons $s=1$.
\begin{itemize}
\item Si $a\leq -2$ ou $a\geq 2g-2$, alors $A+P$ est toujours
ordinaire, donc $\cS$ est vide, et on a bien $f_{1,X}(a)=0$.
\item Si $g-1\leq a\leq 2g-3$, alors $A$ ordinaire signifie
$l(A)=a+1-g$, donc $A+P$ est ordinaire par Riemann-Roch,
et on conclut de m\^eme.
\item Si $-1\leq a\leq g-2$, alors $A$ ordinaire signifie $l(A)=0$,
et $A+P$ exceptionnel signifie $l(A+P)=1$. 
On conclut gr\^ace au lemme~\ref{lemme1}.a.
\end{itemize}
\item Supposons $s=2$.
\begin{itemize}
\item Si $a\leq -3$ ou $a\geq 2g-1$, alors $A+2P$ est toujours
ordinaire, donc $\cS$ est vide, et on a bien $f_{2,X}(a)=0$.
\item Si $g-1\leq a\leq 2g-2$, alors $A$ ordinaire signifie
$l(A)=a+1-g$, donc $A+2P$ est ordinaire par Riemann-Roch,
et on conclut de m\^eme.
\item Si $a=g-2$, alors $A$ ordinaire signifie $l(A)=0$,
et $A+2P$ exceptionnel signifie $l(A+2P)=2$,
donc $l(A+P)=1$. Le lemme~\ref{lemme1}.a donne alors bien
$|\cS|\leq g=f_{2,X}(a)$.
\item Enfin si $-2\leq a\leq g-3$, alors $A$ ordinaire signifie $l(A)=0$,
et $A+2P$ exceptionnel signifie $l(A+2P)\geq 1$.
On conclut alors gr\^ace au lemme~\ref{lemme2}.a,
(en remarquant notamment, dans \eqref{Plucker2}, qu'ici $i(A)=g-1-a$).
\end{itemize}
\end{enumerate}
\end{proof}

\begin{proposition}
\label{prop-constr}
Soient $X$ une courbe de genre $g$ sur
un corps $K$ parfait, et $r\geq 1$ un entier.
On suppose donn\'es des entiers $s_1,\dots,s_r\in\{1,2\}$,
des diviseurs $K$-rationnels $T_1,\dots,T_r$ sur $X$,
et un entier $d\in\Z$.
On notera $t_i=\deg T_i$.
Soit aussi $D_0$ un diviseur $K$-rationnel sur $X$,
de degr\'e $\deg D_0=d_0\leq d$, tel que les
$s_iD_0-T_i$ soient ordinaires
(c'est vrai par exemple si $d_0\leq\min_i\left\lfloor\frac{t_i-1}{s_i}\right\rfloor$).
Soit enfin $\cS\subset X(K)$
un ensemble de points tel que
\beq
\label{borneS}
|\cS|>\max_{d_0\leq d'<d}\sum_{i=1}^rf_{s_i,X}(s_id'-t_i).
\eeq
Alors il existe un diviseur $K$-rationnel $D$ sur $X$,
de degr\'e $\deg D=d$, tel que les
$s_iD-T_i$ soient ordinaires.
De plus on peut choisir $D$ de fa\c{c}on que $D-D_0$ soit
effectif et \`a support dans $\cS$.
\end{proposition}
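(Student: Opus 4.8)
The plan is to construct $D$ by starting from $D_0$ and adjoining points of $\cS$ one at a time, preserving at every stage the property that all the twisted divisors remain ordinary. More precisely, I would prove by induction on $d'$, for $d_0 \leq d' \leq d$, the following statement: there exists a $K$-rational divisor $D'$ of degree $d'$ such that $D' - D_0$ is effective and supported in $\cS$ and all the $s_i D' - T_i$ are ordinary. The base case $d' = d_0$ is exactly the hypothesis on $D_0$; and the parenthetical sufficient condition is immediate, since $d_0 \leq \lfloor (t_i - 1)/s_i \rfloor$ forces $\deg(s_i D_0 - T_i) = s_i d_0 - t_i \leq -1 < 0$, whence $l(s_i D_0 - T_i) = 0$ and $s_i D_0 - T_i$ is ordinary (recall $g \geq 1$).

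For the inductive step, assume such a $D'$ of degree $d' < d$ is given, and put $A_i = s_i D' - T_i$ for each $i$, so that $A_i$ is ordinary of degree $s_i d' - t_i$. I would call a point $P \in X(K)$ \emph{critique pour $i$} if the divisor $s_i(D' + P) - T_i = A_i + s_i P$ is exceptional. Since $A_i$ is ordinary and $A_i + s_i P$ is exceptional for every such $P$, Lemma~\ref{recapitulatif} applies with $A = A_i$ and $s = s_i$, bounding the number of points critical for $i$ by $f_{s_i, X}(\deg A_i) = f_{s_i, X}(s_i d' - t_i)$. Summing over $i$, the number of points of $X(K)$ critical for at least one index is at most $\sum_{i=1}^r f_{s_i, X}(s_i d' - t_i)$, hence at most $\max_{d_0 \leq d'' < d} \sum_{i=1}^r f_{s_i, X}(s_i d'' - t_i)$, which is strictly less than $|\cS|$ by hypothesis~\eqref{borneS}.

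Consequently $\cS$ contains at least one point $P$ critical for no index. Since $P \in X(K)$ is rational of degree $1$, setting $D'' = D' + P$ gives a $K$-rational divisor with $\deg D'' = d' + 1$, with $D'' - D_0 = (D' - D_0) + P$ again effective and supported in $\cS$, and with every $s_i D'' - T_i = A_i + s_i P$ ordinary by the choice of $P$. This completes the inductive step, and taking $d' = d$ produces the divisor $D$ asserted by the proposition.

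Since the whole argument is a greedy induction resting on the counting bound of Lemma~\ref{recapitulatif}, there is no single heavy computation to carry out; the only points that genuinely require care are verifying that the hypotheses of Lemma~\ref{recapitulatif} hold at each stage --- precisely the ordinariness of the current $s_i D' - T_i$, which the induction carries along --- and observing that a good point $P$ may legitimately be chosen even if it already occurs in $D' - D_0$, since the bound on critical points concerns the underlying set of rational points and is insensitive to the multiplicities already accumulated.
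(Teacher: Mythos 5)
Your proposal is correct and follows essentially the same argument as the paper: a greedy induction adjoining one point of $\cS$ at a time, using Lemma~\ref{recapitulatif} applied to $A=s_iD'-T_i$ to bound the number of bad points at each step, and the hypothesis~\eqref{borneS} to guarantee a good point exists in $\cS$. Your additional remarks (verifying the parenthetical sufficient condition on $d_0$, and noting that the chosen point may repeat ones already accumulated in $D'-D_0$) are accurate and only make explicit what the paper leaves implicit.
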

\begin{proof}
On construit $D$ de proche en proche.
Soit $d'$ tel que $d_0\leq d'<d$, et supposons construit $D'$ de degr\'e
de degr\'e $\deg D'=d'$, tel que les
$s_iD'-T_i$ soient ordinaires, et tel que $D'-D_0$ soit
effectif et \`a support dans $\cS$.
Le lemme~\ref{recapitulatif} appliqu\'e avec $s=s_i$
et $A=s_iD'-T_i$ montre qu'il y a au plus 
$f_{s_i,X}(s_id'-t_i)$ points tels que $s_i(D'+P)-T_i$
soit exceptionnel. Gr\^ace \`a \eqref{borneS},
on peut donc trouver $P\in\cS$ tel que les $s_i(D'+P)-T_i$ soient ordinaires,
et par construction $(D'+P)-D_0$ est encore
effectif et \`a support dans $\cS$.
On conclut alors par r\'ecurrence sur $d'$.
\end{proof}

\begin{remarque}
La restriction impos\'ee aux $s_i$ de valoir $1$ ou $2$ r\'esulte
des particularit\'es de la caract\'eristique positive indiqu\'ees
dans la remarque~\ref{remcarpos}.
En caract\'eristique nulle, on pourrait donc
\'enoncer une variante de la proposition~\ref{prop-constr}
valable sans restriction sur les $s_i$.
Pour les applications auxquelles on s'int\'eresse ici, cela n'est
cependant pas n\'ecessaire.
\end{remarque}

Pour la suite il pourra \^etre utile de pr\'eciser quelques propri\'et\'es
des $f_{s,X}$.

Le corps $K$ \'etant donn\'e, 
pour tout r\'eel
$\nu$ on introduit deux fonctions $\phi_{1,\nu}$ et $\phi_{2,\nu}$
d\'efinies sur $\R$, comme suit~:
\beq
\phi_{1,\nu}(\alpha)=
\begin{cases}
1 & \textrm{si $0\leq\alpha\leq1$}\\
0 & \textrm{si $\alpha<0$ ou $\alpha>1$}
\end{cases}
\eeq
et
\beq
\label{defphi2nu}
\phi_{2,\nu}(\alpha)=
\begin{cases}
\frac{3q^2+1}{q^2+1}+\frac{q^2-1}{q^2+1}\alpha-\frac{2q^2}{q^4-1}\nu & \textrm{si $0\leq\alpha<1$ et $|K|=q<+\infty$}\\
3+\alpha & \textrm{si $0\leq\alpha<1$ et $K$ infini}\\
4 & \textrm{si $\alpha=1$}\\
0 & \textrm{si $\alpha<0$ ou $\alpha>1$}
\end{cases}
\eeq
(remarquons que l'expression pour $K$ infini peut s'obtenir
comme limite du cas fini
quand $q\longto\infty$).

\begin{lemme}
\label{ptesdiverses}
Avec les notations pr\'ec\'edentes, soit $s\in\{1,2\}$.
\begin{enumerate}[a)]
\item
Pour toute courbe $X$ sur $K$, la fonction $f_{s,X}(a)$ est une fonction
croissante de $a$ pour $a\leq g-1-s$.

Son maximum sur $\Z$ est $f_{s,X}(g-1-s)=s^2g$.
\item
Soient $\alpha$ et $\nu$ deux r\'eels, avec $\alpha\not\in\{0,1\}$.
Pour tout entier $j\geq1$, on se donne une courbe $X^{(j)}$ de genre
$g^{(j)}$ sur $K$, et un entier $a^{(j)}\in\Z$. On suppose que,
lorsque $j$ tend vers l'infini, on a $g^{(j)}\longto\infty$, et~:
\begin{itemize}
\item $\frac{a^{(j)}}{g^{(j)}}\longto\alpha$
\item $\liminf\frac{|X^{(j)}(K)|}{g^{(j)}}\geq\nu$.
\end{itemize}
Alors, quand $j$ tend vers l'infini,
\beq
\limsup\frac{f_{s,X^{(j)}}(a^{(j)})}{g^{(j)}}\leq\phi_{s,\nu}(\alpha).
\eeq
\end{enumerate}
\end{lemme}
\begin{proof}
Pour a), le seul point non trivial est la croissance de $f_{2,X}$
sur l'intervalle $-2\leq a\leq g-3$ lorsque $|K|=q<\infty$.
Sous cette hypoth\`ese, posons
\beq
\label{ftilde}
\widetilde{f_{2,X}}(a,w)=(g\!-\!1\!-\!a\!-\!w)+\!\left(1+\frac{q^{w-2}\!-\!1}{q^{w}-1}\right)^{-1}\!\!\!(2g\!-\!2+2a+4w-\!2G_q(w)|X(\F_q)|)
\eeq
de sorte que
$f_{2,X}(a)=\min_{2\leq w\leq g-1-a}\lfloor\widetilde{f_{2,X}}(a,w)\rfloor$.

Pour $w\geq 2$, on a
$\frac{q^{w-2}\!-1}{q^{w}-1}\leq\frac{1}{q^2}$,
d'o\`u
$\left(1+\frac{q^{w-2}\!-1}{q^{w}-1}\right)^{-1}\geq1-\frac{1}{q^2+1}$,
de sorte qu'\`a $w$ fix\'e, $\widetilde{f_{2,X}}(a,w)$ est fonction croissante
de $a$.
On conclut alors en passant au $\min$ sur $w$.

De m\^eme pour b), le seul cas non trivial est celui
o\`u $s=2$ et $0\leq\alpha<1$, avec $|K|=q<\infty$.
On se placera donc sous cette hypoth\`ese.

Puisque $\frac{a^{(j)}}{g^{(j)}}\longto\alpha<1$ on peut, pour tout $j$
assez grand, choisir un entier $w^{(j)}$
v\'erifiant $2\leq w^{(j)}\leq g^{(j)}-1-a^{(j)}$, et de fa\c{c}on que~:
\begin{itemize}
\item $w^{(j)}\longto\infty$
\item $\frac{w^{(j)}}{g^{(j)}}\longto 0.$
\end{itemize}
Alors $\frac{q^{w^{(j)}-2}\!-1}{q^{w^{(j)}}-1}\longto\frac{1}{q^2}$
et $G_q(w^{(j)})\longto\frac{1}{q^2-1}$, et en rempla\c{c}ant
dans \eqref{ftilde},
\beq
\limsup\frac{\widetilde{f_{2,X^{(j)}}}(a^{(j)},w^{(j)})}{g^{(j)}}\leq 1-\alpha+\left(1-\frac{1}{q^2+1}\right)\left(2+2\alpha-2\frac{\nu}{q^2-1}\right).
\eeq
On conclut alors puisque
$\limsup\frac{f_{2,X^{(j)}}(a^{(j)})}{g^{(j)}}\leq\limsup\frac{\widetilde{f_{2,X^{(j)}}}(a^{(j)},w^{(j)})}{g^{(j)}}$.
\end{proof}

Gr\^ace \`a ce lemme
on pourrait, si on le souhaitait, \'enoncer une version ``asymptotique''
de la proposition pr\'ec\'edente. Cependant, le faire
de fa\c{c}on optimale
n\'ecessiterait, dans le cas g\'en\'eral,
une distinction de cas assez fastidieuse
li\'ee aux discontinuit\'es des $\phi_{s,\nu}$ en $0$ et $1$.
On se contentera donc de le faire dans le cadre des deux applications
qui nous int\'eressent,
\`a savoir la construction de codes lin\'eaires intersectants
($r=1$ et $s_1=2$),
et celle d'algorithmes de multiplication dans les corps finis de
faible complexit\'e bilin\'eaire 
($r=2$ et $s_1=1$, $s_2=2$).

Comme indiqu\'e dans l'introduction, le cas des syst\`emes de partage
de secret avec propri\'et\'e de multiplication semble en revanche moins
bien s'y pr\^eter ($r$ variable).





\section{Application aux codes lin\'eaires intersectants}

On commence par rappeler quelques d\'efinitions et r\'esultats
de \cite{ITW2010}\cite{21sep}\cite{Xing2002}.

Un code lin\'eaire intersectant de param\`etres
$[n,k]$ sur un corps $K$ est un sous-espace vectoriel $C\subset K^n$ de
dimension $k$ tel que, pour tous $c,c'\in C$ non nuls,
les supports de $c$ et $c'$ s'intersectent
(\ie il existe $i$ tel que $c_ic_i'\neq0$).

On note $R_K$ le rendement asymptotique maximal,
\cad la $\limsup$ du rapport $k/n$,
qu'un tel code peut atteindre.
Si $K=\F_q$, on notera aussi $R_q$ pour $R_{\F_q}$.

Soit $X$ une courbe de genre $g$ sur un corps $K$.
Soient $P_1,\dots,P_n\in X(K)$ des points de $X$,
deux \`a deux distincts,
et pour chaque $i$, soit $z_i$ une uniformisante en $P_i$.
On notera $G$ la donn\'ee de ces $P_i$ et $z_i$ (par
abus de notation, on \'ecrira aussi parfois
$G=P_1+\cdots+P_n$,
le diviseur sur $X$ somme des $P_i$). Alors~:

\begin{definition}
\label{defevgen}
Pour tout diviseur $K$-rationnel $D$ sur $X$, le code
de Goppa g\'en\'eralis\'e $C(G,D)\subset K^n$
est l'image de l'application d'\'evaluation
\beq
\begin{array}{cccc}
\ev_{G,D}: & \cL(D) & \longto & K^n \\
& f & \mapsto & ((z_1^{\nu_1}f)(P_1),\dots,(z_n^{\nu_n}f)(P_n))
\end{array}
\eeq
o\`u $\nu_i=v_{P_i}(D)$ est la valuation de $D$ en $P_i$.
\end{definition}
Lorsque $D$ varie, la collection des $\ev_{G,D}$
d\'efinit un morphisme de $K$-alg\`ebres
\beq
\ev_G:\bigoplus_{D\in\Div_{\F_q}(X)}\cL(D)\longto K^n,
\eeq
o\`u $\bigoplus_D\cL(D)$ est munie de sa structure de $K$-alg\`ebre
gradu\'ee provenant de la multiplication dans le corps de fonctions
$K(X)$, et o\`u $K^n$ est muni de la
multiplication composante par composante.

De fa\c{c}on plus concr\`ete, pour tous diviseurs $D$ et $D'$,
le diagramme
\beq
\label{morphisme-algebre}
\begin{CD}
\cL(D)\times\cL(D') @>\ev_{G,D}\times\ev_{G,D'}>> K^n\times K^n\\
@VVV @VVV\\
\cL(D+D') @>\ev_{G,D+D'}>> K^n
\end{CD}
\eeq
commute,
envoyant
 $(f,f')\in\cL(D)\times\cL(D')$
sur
\beq
((z_1^{\nu_1+\nu_1'}ff')(P_1),\dots,(z_n^{\nu_n+\nu_n'}ff')(P_n))\in K^n
\eeq
(o\`u $\nu_i=v_{P_i}(D)$ et $\nu_i'=v_{P_i}(D')$).

Alors~:

\begin{proposition}[crit\`ere de Xing, \cite{Xing2002} Th.~3.5, ou \cite{21sep} Th.~7]
Avec les notations qui pr\'ec\`edent, supposons $\deg D<n=\deg G$
et
\beq
l(2D-G)=0.
\eeq
Alors $C(G,D)\subset K^n$ est 
un code lin\'eaire intersectant, de dimension $l(D)$.
\end{proposition}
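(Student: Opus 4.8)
The plan is to read off both assertions from a single computation of the kernel of the evaluation map, together with the multiplicative compatibility recorded in diagram~\eqref{morphisme-algebre}. First I would pin down the kernel of $\ev_{G,D}$. An element $f\in\cL(D)$ lies in $\ker\ev_{G,D}$ exactly when $(z_i^{\nu_i}f)(P_i)=0$ for every $i$; since $z_i^{\nu_i}f$ is regular at $P_i$, this vanishing means $v_{P_i}(z_i^{\nu_i}f)\geq1$, i.e. $v_{P_i}(f)\geq1-\nu_i$ for all $i$. As $\nu_i=v_{P_i}(D)$, this is precisely the condition $f\in\cL(D-G)$ (with $G=P_1+\cdots+P_n$ regarded as a divisor). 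Hence $\ker\ev_{G,D}=\cL(D-G)$, and because $\deg(D-G)=\deg D-n<0$ by hypothesis, this space is zero. So $\ev_{G,D}$ is injective and $\dim C(G,D)=l(D)$, which settles the dimension claim.

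The heart of the argument is the intersecting property. I would take two nonzero codewords $c=\ev_{G,D}(f)$ and $c'=\ev_{G,D}(f')$, with $f,f'\in\cL(D)\setminus\{0\}$, and argue by contradiction, assuming their supports are disjoint, i.e. $c_ic_i'=0$ for every $i$. The key observation is that, evaluation at $P_i$ being a ring homomorphism on functions regular there, the componentwise product is computed by
\beq
c_ic_i'=(z_i^{\nu_i}f)(P_i)\,(z_i^{\nu_i}f')(P_i)=(z_i^{2\nu_i}ff')(P_i),
\eeq
which is exactly the $i$-th component of $\ev_{G,2D}(ff')$, since $v_{P_i}(2D)=2\nu_i$ and $ff'\in\cL(2D)$. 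This is precisely the content of diagram~\eqref{morphisme-algebre} taken with $D'=D$.

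Thus the disjointness hypothesis says $ff'\in\ker\ev_{G,2D}$. Repeating the kernel computation above with $2D$ in place of $D$ gives $\ker\ev_{G,2D}=\cL(2D-G)$, which vanishes by the hypothesis $l(2D-G)=0$. Hence $ff'=0$; but $K(X)$ is an integral domain and $f,f'\neq0$, a contradiction. Therefore the supports of $c$ and $c'$ must meet, and $C(G,D)$ is intersecting. The only place where any care is needed is the valuation bookkeeping identifying the two kernels with $\cL(D-G)$ and $\cL(2D-G)$; I expect this to be the main (though minor) obstacle, since everything else is formal. The two hypotheses $\deg D<n$ and $l(2D-G)=0$ then enter at exactly the two points where a space of global sections must be forced to vanish, giving injectivity of $\ev_{G,D}$ on the one hand and the intersecting property on the other.
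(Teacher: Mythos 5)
Votre preuve est correcte et suit essentiellement la m\^eme d\'emarche que celle du texte~: commutativit\'e du diagramme~\eqref{morphisme-algebre} avec $D'=D$, identification $\ker\ev_{G,2D}=\cL(2D-G)$ (annul\'e par hypoth\`ese), et int\'egrit\'e de $K(X)$ pour conclure. Vous ne faites qu'expliciter les d\'etails (calcul des noyaux, dimension) que le texte renvoie \`a \cite{21sep}.
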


La preuve r\'esulte de la commutativit\'e
du diagramme \eqref{morphisme-algebre}, avec $D'=D$,
et du fait que $\ker\ev_{G,2D}=\cL(2D-G)$.
Pour plus de d\'etails, voir par exemple \cite{21sep}.

On en d\'eduit~:

\begin{proposition}
Soient $X$ une courbe de genre $g$ sur un corps $K$, suppos\'e parfait,
$\cS\subset X(K)$ un ensemble de points de $X$,
et $n$ un entier naturel tel que $g\leq n\leq|X(K)|$.
Soit $d$ un entier naturel v\'erifiant
\beq
\label{d<n+g-1/2}
d\leq\frac{n+g-1}{2}
\eeq
et
\beq
\label{X>f2(2d-2-n)}
|\cS|>f_{2,X}(2d-2-n).
\eeq
Alors il existe 
des diviseurs 
$D$ de degr\'e $d$, \`a support dans $\cS$,
et $G$ de degr\'e $n$ sur $X$, tels que le code
$C=C(G,D)\subset K^n$ soit intersectant et
de dimension $\dim C\geq d+1-g$.

En particulier, si $|X(K)|>4g$, il existe 
un code lin\'eaire intersectant $C\subset K^n$ de dimension
$\dim C\geq\left\lfloor\frac{n+g-1}{2}\right\rfloor+1-g\geq\frac{n-g}{2}$.
\end{proposition}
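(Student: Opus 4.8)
Le plan serait de déduire l'énoncé du critère de Xing et de la proposition~\ref{prop-constr}, en se ramenant à fabriquer un diviseur $D$ de degré $d$ pour lequel $l(2D-G)=0$. On commencerait par fixer le diviseur $G$ : puisque $n\leq|X(K)|$, on choisit $n$ points $K$-rationnels deux à deux distincts (munis d'uniformisantes arbitraires), ce qui fournit un $G$ de degré $n$. On se place alors dans le cadre du problème~\ref{pb3} avec $r=1$, $s_1=2$ et $T_1=G$ (de sorte que $t_1=n$), en prenant comme diviseur initial $D_0=0$ : celui-ci vérifie bien $\deg D_0=0\leq d$, et $2D_0-T_1=-G$ est ordinaire, puisque de degré $-n<0\leq g-1$.

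L'étape centrale serait de constater que l'hypothèse \eqref{borneS} de la proposition~\ref{prop-constr} se réduit ici exactement à notre hypothèse \eqref{X>f2(2d-2-n)}, autrement dit que
\beq
\max_{0\leq d'<d}f_{2,X}(2d'-n)=f_{2,X}(2d-2-n).
\eeq
Cela découlerait de la monotonie fournie par le lemme~\ref{ptesdiverses}.a : pour $d'<d$ on a $2d'-n\leq 2d-2-n$, tandis que l'hypothèse \eqref{d<n+g-1/2} donne $2d-n\leq g-1$, d'où $2d-2-n\leq g-3=g-1-s$ ; tous les arguments $2d'-n$ se trouvent ainsi dans la zone où $f_{2,X}$ est croissante, et le maximum est atteint au plus grand d'entre eux, celui correspondant à $d'=d-1$. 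La proposition~\ref{prop-constr} livrerait alors un diviseur $D$ de degré $d$, avec $D-D_0=D$ effectif et à support dans $\cS$, tel que $2D-G$ soit ordinaire.

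Il ne resterait qu'à assembler les pièces. Comme $\deg(2D-G)=2d-n\leq g-1$, dire que $2D-G$ est ordinaire signifie précisément $l(2D-G)=0$. De plus $n\geq g$, joint à \eqref{d<n+g-1/2}, donne $2d\leq n+g-1\leq 2n-1$, donc $\deg D=d<n=\deg G$ ; le critère de Xing s'applique et montre que $C(G,D)$ est intersectant de dimension $l(D)$, et Riemann-Roch fournit $l(D)\geq \deg D+1-g=d+1-g$, d'où l'inégalité annoncée $\dim C\geq d+1-g$. Pour l'assertion finale, on prendrait $\cS=X(K)$ et $d=\left\lfloor\frac{n+g-1}{2}\right\rfloor$ : le lemme~\ref{ptesdiverses}.a borne $f_{2,X}(2d-2-n)$ par son maximum $s^2g=4g$, strictement inférieur à $|X(K)|=|\cS|$, de sorte que \eqref{X>f2(2d-2-n)} est satisfaite, et l'on vérifie sans peine que $\left\lfloor\frac{n+g-1}{2}\right\rfloor+1-g\geq\frac{n-g}{2}$.

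On n'anticipe pas de réelle difficulté : tous les ingrédients lourds (formules de Plücker, construction de proche en proche, critère de Xing) sont déjà en place, et le seul point demandant un peu d'attention est la réduction du maximum figurant dans \eqref{borneS} à son unique terme dominant, qui repose entièrement sur la monotonie de $f_{2,X}$ établie au lemme~\ref{ptesdiverses}.a et sur la contrainte de degré \eqref{d<n+g-1/2}.
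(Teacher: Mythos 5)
Votre preuve est correcte et suit essentiellement la même démarche que celle du papier : application de la proposition~\ref{prop-constr} avec $r=1$, $s_1=2$, $T_1=G$, réduction du maximum dans \eqref{borneS} au terme $f_{2,X}(2d-2-n)$ via la monotonie du lemme~\ref{ptesdiverses}.a, puis critère de Xing et Riemann-Roch. La seule différence est le choix du diviseur initial ($D_0=0$ au lieu de $D_0=\left\lfloor\frac{n-1}{2}\right\rfloor P_1$ avec $P_1\in\cS$ dans le papier), ce qui élargit la plage des $d'$ dans \eqref{borneS} mais ne change rien grâce à cette même monotonie.
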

\begin{proof}
Choisissons $P_1,\dots,P_n\in X(K)$ deux \`a deux distincts,
avec $P_1\in\cS$,
et appliquons la proposition \ref{prop-constr}
avec $r=1$, $s_1=2$, $T_1=G=P_1+\cdots+P_n$,
$d_0=\left\lfloor\frac{n-1}{2}\right\rfloor$, $D_0=d_0P_1$.
Le lemme~\ref{ptesdiverses}.a et \eqref{X>f2(2d-2-n)}
impliquent que \eqref{borneS} est v\'erifi\'ee, et
on en d\'eduit l'existence d'un diviseur $D$ de degr\'e $d$
\`a support dans $\cS$ tel
que $2D-G$ soit ordinaire. Alors par \eqref{d<n+g-1/2} on a
$2d-n\leq g-1$ donc $l(2D-G)=0$, et aussi $d<n$, et le crit\`ere de Xing montre
que le code $C(G,D)$ v\'erifie les conditions demand\'ees.

La derni\`ere assertion r\'esulte du fait que $f_{2,X}(g-3)=4g$,
de sorte qu'on peut prendre $\cS=X(K)$ et
$d=\lfloor\frac{n+g-1}{2}\rfloor$.
\end{proof}

\begin{corollaire}
Soient $K$ un corps parfait, et $\nu>1$ un r\'eel.
Pour tout entier $j\geq1$, on se donne une courbe $X^{(j)}$ de genre
$g^{(j)}$ sur $K$, et on suppose que, lorsque $j$ tend vers l'infini,
on a $g^{(j)}\longto\infty$ et
\beq
\liminf\frac{|X^{(j)}(K)|}{g^{(j)}}\geq\nu.
\eeq
Alors pour $j$ assez grand, il existe des diviseurs
$D^{(j)}$ et $G^{(j)}$ sur $X^{(j)}$ tels que le code $C(G^{(j)},D^{(j)})$
soit intersectant de param\`etres $[n^{(j)},k^{(j)}]$ avec, lorsque
$j$ tend vers l'infini, $\frac{n^{(j)}}{g^{(j)}}\longto\nu$
et
\beq
\liminf\frac{k^{(j)}}{n^{(j)}}\geq\min\left(1-\frac{5}{2\nu},\frac{1}{2}-\frac{1}{2\nu}\right).
\eeq
Lorsque $|K|=q<+\infty$, cette derni\`ere condition peut \^etre am\'elior\'ee en
\beq
\label{Rqnu}
\liminf\frac{k^{(j)}}{n^{(j)}}\geq\min\left(\,1-\frac{5}{2\nu}+\frac{2-\frac{2}{\nu}+\frac{1}{q^2-1}}{q^2-1}\,,\;\frac{1}{2}-\frac{1}{2\nu}\,\right).
\eeq
\end{corollaire}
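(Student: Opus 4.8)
Le plan est de ramener l'\'enonc\'e \`a la proposition pr\'ec\'edente, en choisissant convenablement les degr\'es $n^{(j)}$ et $d^{(j)}$. On commencera par fixer, pour $j$ assez grand, un entier $n^{(j)}$ v\'erifiant $g^{(j)}\leq n^{(j)}\leq|X^{(j)}(K)|$ et $\frac{n^{(j)}}{g^{(j)}}\longto\nu$~; c'est possible car $\nu>1$ et $\liminf\frac{|X^{(j)}(K)|}{g^{(j)}}\geq\nu$, en prenant par exemple $n^{(j)}=\lfloor(\nu-\epsilon_j)g^{(j)}\rfloor$ pour une suite $\epsilon_j\downarrow0$ tendant vers $0$ assez lentement (argument diagonal standard). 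On appliquera ensuite la proposition pr\'ec\'edente avec $\cS=X^{(j)}(K)$ et un entier $d^{(j)}$ \`a d\'eterminer~: elle produira des diviseurs $D^{(j)},G^{(j)}$ fournissant un code intersectant $[n^{(j)},k^{(j)}]$ avec $k^{(j)}\geq d^{(j)}+1-g^{(j)}$, pourvu que $d^{(j)}$ satisfasse \eqref{d<n+g-1/2} et \eqref{X>f2(2d-2-n)}. Si l'on arrange $\frac{d^{(j)}}{g^{(j)}}\longto\delta$, le rendement v\'erifiera $\liminf\frac{k^{(j)}}{n^{(j)}}\geq\frac{\delta-1}{\nu}$, et tout le probl\`eme sera de rendre $\delta$ le plus grand possible.

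Les deux contraintes sur $\delta$ proviennent de \eqref{d<n+g-1/2} et \eqref{X>f2(2d-2-n)}. La premi\`ere, $d^{(j)}\leq\frac{n^{(j)}+g^{(j)}-1}{2}$, impose $\delta\leq\frac{\nu+1}{2}$, donc $\alpha\leq1$ o\`u l'on pose $\alpha=2\delta-\nu$~; elle plafonne le rendement \`a $\frac{1}{2}-\frac{1}{2\nu}$ (c'est la borne de Riemann-Roch, second terme du minimum). Pour la seconde, on posera $a^{(j)}=2d^{(j)}-2-n^{(j)}$, de sorte que $\frac{a^{(j)}}{g^{(j)}}\longto\alpha$~; le lemme~\ref{ptesdiverses}.b appliqu\'e avec $s=2$ donnera alors $\limsup\frac{f_{2,X^{(j)}}(a^{(j)})}{g^{(j)}}\leq\phi_{2,\nu}(\alpha)$ pourvu que $\alpha\notin\{0,1\}$. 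Comme $\liminf\frac{|X^{(j)}(K)|}{g^{(j)}}\geq\nu$, il suffira, pour garantir \eqref{X>f2(2d-2-n)} pour $j$ grand, de choisir $\delta$ (donc $\alpha$) de fa\c{c}on que $\phi_{2,\nu}(\alpha)<\nu$.

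Il restera \`a optimiser le rendement $\frac{\alpha+\nu-2}{2\nu}$, croissant en $\alpha$, sous les contraintes $\alpha\leq1$ et $\phi_{2,\nu}(\alpha)<\nu$. Dans le cas $K$ infini, $\phi_{2,\nu}(\alpha)=3+\alpha$ sur $[0,1)$, donc $\phi_{2,\nu}(\alpha)<\nu$ \'equivaut \`a $\alpha<\nu-3$~; en laissant $\alpha$ tendre vers $\min(1,\nu-3)$ par valeurs inf\'erieures (et, lorsque $\nu\leq3$, en laissant $\alpha$ tendre vers une valeur strictement n\'egative, o\`u $\phi_{2,\nu}=0$, ce qui correspond au r\'egime trivial $d^{(j)}\leq\frac{n^{(j)}-1}{2}$ o\`u $2D^{(j)}-G^{(j)}$ est de degr\'e n\'egatif), on v\'erifiera que le rendement peut \^etre rendu $\geq\min(1-\frac{5}{2\nu},\frac{1}{2}-\frac{1}{2\nu})$, d'o\`u la premi\`ere assertion. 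Lorsque $K=\F_q$ est fini, le m\^eme raisonnement avec l'expression fine de $\phi_{2,\nu}$ donn\'ee par \eqref{defphi2nu} conduira, en r\'esolvant $\phi_{2,\nu}(\alpha)=\nu$, \`a un seuil $\alpha^\ast$ puis, apr\`es simplification, au rendement $\frac{\alpha^\ast+\nu-2}{2\nu}=1-\frac{5}{2\nu}+\frac{2-\frac{2}{\nu}+\frac{1}{q^2-1}}{q^2-1}$~; en prenant le minimum avec le plafond de Riemann-Roch $\frac{1}{2}-\frac{1}{2\nu}$, on obtiendra \eqref{Rqnu}.

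La principale difficult\'e sera la gestion des discontinuit\'es de $\phi_{2,\nu}$ en $\alpha=0$ et $\alpha=1$ — c'est d'ailleurs ce qui a dissuad\'e d'\'enoncer une proposition asymptotique g\'en\'erale. Il faudra veiller \`a ce que la suite $\alpha^{(j)}=\frac{a^{(j)}}{g^{(j)}}$ converge vers le seuil pertinent \emph{en restant \`a l'\'ecart de $0$ et de $1$}, afin que le lemme~\ref{ptesdiverses}.b reste applicable, et distinguer les positions de $\nu$ par rapport \`a $3$ et \`a $4$ (selon que c'est \eqref{X>f2(2d-2-n)} ou \eqref{d<n+g-1/2} qui est saturante). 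Le reste n'est qu'un calcul asymptotique de routine, les effets d'arrondi \'etant en $O(1)$, donc n\'egligeables devant $g^{(j)}$.
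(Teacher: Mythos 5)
Votre proposition est correcte et suit essentiellement la m\^eme d\'emarche que la preuve de l'article~: r\'eduction \`a la proposition pr\'ec\'edente avec $\cS=X^{(j)}(K)$, passage \`a l'asymptotique par le lemme~\ref{ptesdiverses}.b, optimisation de $\alpha=2\delta-\nu$ sous le plafond de Riemann-Roch et la contrainte $\phi_{2,\nu}(\alpha)<\nu$, puis argument diagonal. La seule diff\'erence, purement r\'edactionnelle, est que l'article travaille \`a $\epsilon$ fix\'e avec les marges $\nu_\epsilon=\nu-\epsilon$ et $\nu_{2\epsilon}$ avant de diagonaliser en $\epsilon$ \`a la fin, alors que vous diagonalisez d\`es le choix de $n^{(j)}$ puis faites tendre $\alpha$ vers le seuil en \'evitant les discontinuit\'es~; vos seuils ($\nu-3$, $\alpha^*$) et les rendements qui en r\'esultent sont exactement ceux de l'article.
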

\begin{proof}
Pour tout $\epsilon>0$,
posons $\nu_\epsilon=\nu-\epsilon$.
Choisissons un tel $\epsilon$ suffisamment petit, de sorte que
$\nu>\nu_\epsilon>\nu_{2\epsilon}>1$.

Pour tout $j$ suffisamment grand, choisissons un entier $n_\epsilon^{(j)}$,
en faisant en sorte que
$\frac{n_\epsilon^{(j)}}{g^{(j)}}\longto\nu_\epsilon$ lorsque $j$ tend vers
l'infini.
Pour $j$ assez grand on aura donc $g^{(j)}\leq n_\epsilon^{(j)}\leq|X^{(j)}(K)|$.

Soit $\delta_\epsilon$ le plus grand r\'eel v\'erifiant les in\'egalit\'es
\beq
\label{d<n+g-1/2asympt}
\delta_\epsilon\leq\frac{\nu_{2\epsilon}+1}{2}
\eeq
et
\beq
\label{X>f2(2d-2-n)asympt}
\nu_{\epsilon}\geq\phi_{2,\nu_{\epsilon}}(2\delta_\epsilon-\nu_{\epsilon}).
\eeq
Autrement dit, si $K$ est infini,
\beq
\delta_\epsilon=\min\left(\nu_{\epsilon}-\frac{3}{2},\frac{\nu_{2\epsilon}+1}{2}\right)
\eeq
ou bien, si $|K|=q<+\infty$,
\beq
\delta_\epsilon=\min\left(\nu_{\epsilon}-\frac{3}{2}+\frac{2\nu_{\epsilon}-2+\frac{\nu_{\epsilon}}{q^2-1}}{q^2-1}\,,\;\frac{\nu_{2\epsilon}+1}{2}\right).
\eeq
Pour tout $j$ suffisamment grand, choisissons un entier $d_\epsilon^{(j)}$,
en faisant en sorte que
$\frac{d_\epsilon^{(j)}}{g^{(j)}}\longto\delta_\epsilon$ lorsque $j$ tend vers
l'infini. Alors, 
pour $j$ assez grand, on aura par \eqref{d<n+g-1/2asympt}~:
\beq
d_\epsilon^{(j)}\leq\frac{n_\epsilon^{(j)}+g^{(j)}-1}{2}
\eeq
et, par le lemme~\ref{ptesdiverses}.b et \eqref{X>f2(2d-2-n)asympt}~:
\beq
|X^{(j)}(K)|>f_{2,X^{(j)}}(2d_\epsilon^{(j)}-2-n_\epsilon^{(j)}).
\eeq
On est donc en mesure d'appliquer la proposition pr\'ec\'edente, ce qui
nous fournit, pour $j$ assez grand,
un code lin\'eaire intersectant $C_\epsilon^{(j)}$
de param\`etres $[n_\epsilon^{(j)},k_\epsilon^{(j)}]$, avec
$k_\epsilon^{(j)}\geq d_\epsilon^{(j)}+1-g^{(j)}$.
On trouve alors, pour $j$ tendant vers l'infini~:
\beq
\liminf\frac{k_\epsilon^{(j)}}{n_\epsilon^{(j)}}\geq\frac{\delta_\epsilon-1}{\nu_\epsilon}=\min\left(1-\frac{5}{2\nu_\epsilon},\frac{1}{2}\frac{\nu_{2\epsilon}}{\nu_\epsilon}-\frac{1}{2\nu_\epsilon}\right)
\eeq
si $K$ est infini, tandis que si $|K|=q<+\infty$~:
\beq
\liminf\frac{k_\epsilon^{(j)}}{n_\epsilon^{(j)}}\geq\frac{\delta_\epsilon-1}{\nu_\epsilon}=\min\left(1-\frac{5}{2\nu_\epsilon}+\frac{2-\frac{2}{\nu_\epsilon}+\frac{1}{q^2-1}}{q^2-1}\,,\;\frac{1}{2}\frac{\nu_{2\epsilon}}{\nu_\epsilon}-\frac{1}{2\nu_\epsilon}\right).
\eeq
Ceci \'etant vrai pour tout $\epsilon>0$ assez petit,
on conclut par un argument diagonal.
\end{proof}

Lorsque $|K|=q<+\infty$, on note $A(q)$ le plus grand r\'eel tel
qu'il existe une suite de courbes $X^{(j)}$ de genre $g^{(j)}$ sur $K$,
avec $g^{(j)}\longto\infty$ et $\frac{|X^{(j)}(K)|}{g^{(j)}}\longto A(q)$
lorsque $j$ tend vers l'infini.

On sait que $0<A(q)\leq\sqrt{q}-1$, et que l'in\'egalit\'e de droite
est une \'egalit\'e au moins quand $q$ est un carr\'e
(\cite{DV}\cite{Ihara}\cite{TVZ}).

\begin{corollaire}
Si $A(q)\geq 4-\frac{12q^2-4}{q^4+2q^2-1}$,
on a
\beq
R_q\geq\frac{1}{2}-\frac{1}{2A(q)}.
\eeq
\end{corollaire}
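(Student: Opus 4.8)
Le plan est d'appliquer le corollaire pr\'ec\'edent (celui dont la conclusion est \eqref{Rqnu}) avec le choix $\nu=A(q)$, puis de v\'erifier que l'hypoth\`ese faite sur $A(q)$ force, dans ce minimum, la premi\`ere branche \`a dominer la seconde, de sorte que celui-ci vaut exactement $\frac{1}{2}-\frac{1}{2A(q)}$.

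On commencera par noter que le minorant $4-\frac{12q^2-4}{q^4+2q^2-1}$ est $>1$ pour tout $q\geq2$ (cela revient \`a $3q^4-6q^2+1>0$, vrai d\`es $q=2$ o\`u le minorant vaut $48/23$), de sorte que l'hypoth\`ese entra\^\i ne $A(q)>1$. Par d\'efinition de $A(q)$, on dispose d'une suite de courbes $X^{(j)}$ sur $\F_q$, de genre $g^{(j)}\longto\infty$, avec $\frac{|X^{(j)}(\F_q)|}{g^{(j)}}\longto A(q)$, donc en particulier $\liminf\frac{|X^{(j)}(\F_q)|}{g^{(j)}}\geq A(q)$. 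On peut ainsi appliquer le corollaire pr\'ec\'edent avec $\nu=A(q)$ : il fournit, pour $j$ assez grand, des codes intersectants de param\`etres $[n^{(j)},k^{(j)}]$ tels que
\beq
\liminf\frac{k^{(j)}}{n^{(j)}}\geq\min\left(\,1-\frac{5}{2A(q)}+\frac{2-\frac{2}{A(q)}+\frac{1}{q^2-1}}{q^2-1}\,,\;\frac{1}{2}-\frac{1}{2A(q)}\,\right).
\eeq
Comme $R_q$ est la $\limsup$ des rendements atteignables par un code intersectant, on en d\'eduit que $R_q$ est au moins \'egal \`a ce minimum.

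Il ne restera donc qu'\`a montrer que, sous l'hypoth\`ese faite, le premier argument du minimum est sup\'erieur ou \'egal au second. C'est une simple v\'erification alg\'ebrique, qui constitue l'unique point technique : en posant $Q=q^2-1$, l'in\'egalit\'e
\beq
1-\frac{5}{2\nu}+\frac{2-\frac{2}{\nu}+\frac{1}{Q}}{Q}\;\geq\;\frac{1}{2}-\frac{1}{2\nu}
\eeq
se r\'ecrit, apr\`es regroupement des termes en $\frac{1}{\nu}$ et multiplication par $2Q^2\nu>0$, sous la forme $\nu\,(Q^2+4Q+2)\geq 4Q(Q+1)$, c'est-\`a-dire $\nu\geq\frac{4Q(Q+1)}{Q^2+4Q+2}$. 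En rempla\c{c}ant $Q=q^2-1$ (d'o\`u $Q+1=q^2$ et $Q^2+4Q+2=q^4+2q^2-1$), ceci devient exactement
\beq
\nu\;\geq\;\frac{4q^2(q^2-1)}{q^4+2q^2-1}\;=\;4-\frac{12q^2-4}{q^4+2q^2-1}.
\eeq
L'hypoth\`ese $A(q)\geq 4-\frac{12q^2-4}{q^4+2q^2-1}$ est donc pr\'ecis\'ement celle qui assure, pour $\nu=A(q)$, que le minimum vaut $\frac{1}{2}-\frac{1}{2A(q)}$ ; c'est le seuil o\`u les deux branches du minimum se croisent, ce qui donne la conclusion. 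Il n'y a pas ici d'obstacle conceptuel : toute la substance est dans le corollaire pr\'ec\'edent, et le pr\'esent \'enonc\'e n'en est qu'une sp\'ecialisation \`a $\nu=A(q)$ assortie de la simplification calculatoire ci-dessus.
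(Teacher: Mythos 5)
Votre preuve est correcte et suit exactement la d\'emarche du papier : appliquer le corollaire pr\'ec\'edent avec $\nu=A(q)$ et constater que la valeur $4-\frac{12q^2-4}{q^4+2q^2-1}$ est pr\'ecis\'ement le seuil o\`u les deux branches du $\min$ dans \eqref{Rqnu} se croisent. La seule diff\'erence est que vous explicitez le calcul alg\'ebrique (et la v\'erification $A(q)>1$) que le papier laisse au lecteur.
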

\begin{proof}
On applique le corollaire pr\'ec\'edent avec $\nu=A(q)$
(en remarquant que la valeur critique $\nu=4-\frac{12q^2-4}{q^4+2q^2-1}$
est celle qui rend \'egaux les deux termes dans le $\min$ dans
\eqref{Rqnu}).
\end{proof}

Ceci am\'eliore le th\'eor\`eme~2 de \cite{21sep}, qui parvient \`a
la m\^eme conclusion, mais sous l'hypoth\`ese plus restrictive $A(q)>8$.

On a essay\'e ici de tenir compte syst\'ematiquement de la finitude
de $K$ lorsque cela permettait d'affiner les estimations.
On aurait pu s'en passer, ce qui aurait beaucoup simplifi\'e les calculs,
mais alors on aurait prouv\'e le corollaire seulement sous
l'hypoth\`ese $A(q)\geq4$. Il n'est pas clair que ceci soit
restrictif~: existe-t-il un $q$ tel que
$4-\frac{12q^2-4}{q^4+2q^2-1}\leq A(q)<4$~?
Si c'est le cas,
il serait amusant d'en exhiber un.

D'un autre c\^ot\'e, si $K$ est infini, la th\'eorie
d\'evelopp\'ee ici est essentiellement superflue. 
L'ensemble $\PP^1(K)$ \'etant infini,
on montre facilement
$R_K\geq\frac{1}{2}$ (et m\^eme, en fait, $R_K=\frac{1}{2}$)
sans avoir besoin d'utiliser de courbes de genre sup\'erieur.

\section{Application \`a la complexit\'e bilin\'eaire de la multiplication~: la borne de Shparlinski-Tsfasman-Vladut}

On commence par faire quelques rappels d'ordre g\'en\'eral sur
la complexit\'e bilin\'eaire de la multiplication
dans une extension de corps.
Pour un survol plus complet de ces questions, on pourra consulter
\cite{BR}.

Fixons un corps $K$.
Si $E_1,\dots,E_s$ sont des $K$-espaces vectoriels de dimension finie,
on d\'efinit le \emph{rang tensoriel} d'un \'el\'ement
$t\in E_1\tens\cdots\tens E_s$ comme la longueur minimale d'une
d\'ecomposition de $t$ en somme de tenseurs \'el\'ementaires.

Si $b:E_1\times E_2\longto E_3$ est une application $K$-bilin\'eaire,
on d\'efinit la \emph{complexit\'e bilin\'eaire} de $b$ comme le rang
tensoriel de l'\'el\'ement $\widetilde{b}\in E_1^\vee\tens E_2^\vee\tens E_3$
naturellement d\'eduit de $b$.

Si $L$ est une extension finie de $K$, on notera $\mu_K(L)$ la complexit\'e
bilin\'eaire
de l'application de multiplication $L\times L\longto L$,
consid\'er\'ee comme une application $K$-bilin\'eaire.
Ainsi, plus concr\`etement, $\mu_K(L)$ est le plus petit entier $n$
tel qu'il existe des formes lin\'eaires
$\phi_1,\dots,\phi_n$ et $\psi_1,\dots,\psi_n:L\longto K$,
et des \'el\'ements $w_1,\dots,w_n\in L$, tels que pour tous
$x,y\in L$ on ait
\beq
\label{algomult}
xy=\phi_1(x)\psi_1(y)w_1\,+\,\cdots\,+\,\phi_n(x)\psi_n(y)w_n.
\eeq
Plus g\'en\'eralement, on appelle \emph{algorithme de multiplication}
de longueur $n$ pour $L/K$ la donn\'ee d'une d\'ecomposition du type
\eqref{algomult}. Un tel algorithme est dit \emph{sym\'etrique}
si $\phi_i=\psi_i$ pour tout $i$.

Un algorithme de multiplication \eqref{algomult} \'etant donn\'e,
on peut, suivant \cite{FZ}\cite{LW}\cite{Winograd},
lui associer deux codes lin\'eaires
$C_\phi$ et $C_\psi\subset K^n$, images des applications d'\'evaluation
\beq
\begin{array}{cccc}
\phi: & L & \longto & \!\!\!K^n\\
& x & \mapsto & \!\!\!(\phi_1(x),\dots,\phi_n(x))
\end{array}
\quad\textrm{et}\quad
\begin{array}{cccc}
\psi: & L & \longto & \!\!\!K^n\\
& y & \mapsto & \!\!\!(\psi_1(y),\dots,\psi_n(y))
\end{array}
\eeq
respectivement.
Le fait que $L$ est une $K$-alg\`ebre \emph{int\`egre} implique,
d'une part, que ces
applications sont injectives, donc $C_\phi$ et $C_\psi$ ont dimension
$k=[L:K]$, et d'autre part, que pour tous
$c\in C_\phi$ et $c'\in C_\psi$ non nuls, les supports
de $c$ et $c'$ s'intersectent
(en particulier, si l'algorithme est sym\'etrique,
$C_\phi=C_\psi$ est un \emph{code lin\'eaire intersectant} comme d\'efini dans
la section pr\'ec\'edente).
Ainsi, toute borne sup\'erieure sur le rendement de telles paires
de codes se traduit en une minoration de $\mu_K(L)$.
Par exemple, on montre facilement par
un argument de projection que
$C_\phi$ et $C_\psi$ ont distance minimale au moins $k$,
ce qui implique alors $n\geq 2k-1$ (borne de Singleton). Ainsi,
\beq
\label{Singleton}
\mu_K(L)\geq 2[L:K]-1.
\eeq
Inversement, la m\'ethode de Chudnovsky et Chudnovsky
(\cite{ChCh}, voir aussi ci-dessous)
permet d'obtenir une majoration de $\mu_K(L)$ lin\'eaire en $[L:K]$.
Ceci motive l'introduction de constantes
\beq
m_K=\liminf_{[L:K]\to\infty}\frac{\mu_K(L)}{[L:K]}
\qquad\textrm{et}\qquad
M_K=\limsup_{[L:K]\to\infty}\frac{\mu_K(L)}{[L:K]}.
\eeq
Le cas le plus int\'eressant est celui o\`u $K=\F_q$ est un corps fini.
On notera alors
\beq
\mu_q(k)=\mu_{\F_q}(\F_{q^k}),
\eeq
et
\beq
m_q=m_{\F_q}=\liminf_{k\to\infty}\frac{\mu_q(k)}{k}
\quad\textrm{et}\quad
M_q=M_{\F_q}=\limsup_{k\to\infty}\frac{\mu_q(k)}{k}.
\eeq

Conservant les notations de la section pr\'ec\'edente, et notamment
celles des applications d'\'evaluation g\'en\'eralis\'ees $\ev_{G,D}$
et des codes $C(G,D)$, on peut maintenant \'enoncer~:

\begin{theoreme}[Chudnovsky-Chudnovsky, \cite{ChCh}]
\label{thChCh}
Soit $X$ une courbe sur un corps $K$. On note $G$ la donn\'ee de
points $P_1,\dots,P_n\in X(K)$ deux \`a deux distincts
(et d'une uniformisante en chaque $P_i$). On suppose qu'il existe un 
diviseur $K$-rationnel $D$ sur $X$, et un point ferm\'e $Q$ de $X$
hors du support de $D$,
de corps r\'esiduel $L=K(Q)$, tels que~:
\begin{enumerate}[a)]
\item l'application d'\'evaluation
$\ev_{Q,D}:\cL(D)\longto L$ est surjective
\item l'application d'\'evaluation
$\ev_{G,2D}:\cL(2D)\longto K^n$ est injective.
\end{enumerate}
Alors il existe un algorithme de multiplication sym\'etrique
de longueur $n$ pour
$L/K$, et tel que le code intersectant $C_\phi=C_\psi$ associ\'e
soit un sous-code de $C(G,D)$.
En particulier, $\mu_K(L)\leq n$.
\end{theoreme}
\begin{proof}
Choisissons une section $\sigma:L\longto\cL(D)$ de $\ev_{Q,D}$, posons
\beq
\label{phi=evosigma}
\phi=\ev_{G,D}\circ\sigma:L\longto K^n,
\eeq
et notons $\phi_i$
les composantes de
$\phi$ (et $\psi_i=\phi_i$).
Par ailleurs, via b), \'etendons l'application d'\'evaluation
$\ev_{Q,2D}:\cL(2D)\longto L$ en une application $K$-lin\'eaire
$w:K^n\longto L$, et notons $w_1,\dots,w_n\in L$ l'image par $w$ des
\'el\'ements de la base canonique de $K^n$. 
Alors, la commutativit\'e du diagramme \eqref{morphisme-algebre}, avec $D'=D$,
appliqu\'ee d'une part pour l'\'evaluation en $G$, et d'autre part
pour l'\'evaluation en $Q$,
montre que \eqref{algomult} est bien v\'erifi\'ee~:
plus pr\'ecis\'ement, si $x,y\in L$ et $f=\sigma(x)$, $g=\sigma(y)$,
\beq
\begin{split}
\phi_1(x)\phi_1(y)w_1+\cdots+\phi_n(x)\phi_n(y)w_n&=w(\ev_{G,D}(f)\ev_{G,D}(g))\\
&=w(\ev_{G,2D}(fg))\\
&=\ev_{Q,2D}(fg)\\
&=\ev_{Q,D}(f)\ev_{Q,D}(g)=xy.
\end{split}
\eeq
Enfin on a bien $C_\phi\subset C(G,D)$ par \eqref{phi=evosigma}.
\end{proof}

\begin{proposition}
\label{criteremu}
Soit $X$ une courbe de genre $g$ sur un corps $K$, munie
de points $K$-rationnels $P_1,\dots,P_n\in X(K)$
deux \`a deux distincts, et
d'un point ferm\'e $Q$ de corps r\'esiduel $L=K(Q)$
de degr\'e $k=[L:K]$.
Soit aussi $D$ un diviseur $K$-rationnel sur $X$
de degr\'e $d=\deg D$, de support
ne contenant pas $Q$, et tel que~:
\begin{enumerate}[a)]
\item $2D-G$ et $D-Q$ sont ordinaires
\item $2d-n\leq g-1\leq d-k$
\end{enumerate}
o\`u $G=P_1+\cdots+P_n$ (avec notre abus de notation habituel).
Alors $X,G,D,Q$ v\'erifient les hypoth\`eses du th\'eor\`eme~\ref{thChCh},
de sorte que $\mu_K(L)\leq n$.
\end{proposition}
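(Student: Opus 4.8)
The plan is to check that the data $X, G, D, Q$ satisfy hypotheses a) and b) of Th\'eor\`eme~\ref{thChCh}, translating each evaluation-map condition into a statement about Riemann-Roch dimensions via the definition of an ordinary divisor (D\'efinition~\ref{defordinaire}). The recurring tool I would use is the monotonicity of the speciality index: if $E' \geq E$ then $i(E') \leq i(E)$, since $\cL(\Omega - E') \subset \cL(\Omega - E)$; in particular a non-special divisor stays non-special after adding any effective divisor.

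For hypothesis b), I would use the identification $\ker \ev_{G,2D} = \cL(2D-G)$ recalled in the text just after the Xing criterion, so that injectivity of $\ev_{G,2D}$ is equivalent to $l(2D-G)=0$. Now $2D-G$ is ordinary by a), and $\deg(2D-G) = 2d-n \leq g-1$ by b); by the very definition of an ordinary divisor this forces $l(2D-G)=0$, hence $\ev_{G,2D}$ is injective.

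For hypothesis a), since $Q \notin \Supp D$ the map $\ev_{Q,D} : \cL(D) \longto L$ is defined and has kernel $\cL(D-Q)$, so that its image has $K$-dimension $l(D) - l(D-Q)$; as $\dim_K L = k$, surjectivity amounts to $l(D)-l(D-Q)=k$. Here I would use b) in the form $g-1 \leq d-k = \deg(D-Q)$: since $D-Q$ is ordinary of degree at least $g-1$, one has $l(D-Q) = (d-k)+1-g$. The delicate point, and the only real obstacle, is that one also needs $l(D)$, which requires knowing that $D$ itself is non-special; this is exactly where the monotonicity above enters, since $D \geq D-Q$ together with $D-Q$ non-special give $i(D)=0$, whence $l(D) = d+1-g$ (note $d \geq g-1+k \geq g-1$). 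Subtracting yields $l(D)-l(D-Q) = k$, so $\ev_{Q,D}$ is surjective. With both hypotheses of Th\'eor\`eme~\ref{thChCh} verified, the conclusion $\mu_K(L) \leq n$ follows at once.
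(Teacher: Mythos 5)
Your proof is correct and follows essentially the same route as the paper's: both translate the ordinarity hypotheses into $l(2D-G)=0$ and $l(D-Q)=\deg(D-Q)+1-g$ via the degree conditions, deduce non-speciality of $D$ from $D\geq D-Q$ (the monotonicity you spell out is exactly what the paper uses implicitly), and conclude by computing $\dim\im\ev_{Q,D}=l(D)-l(D-Q)=k$ and $\ker\ev_{G,2D}=\cL(2D-G)=\{0\}$.
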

\begin{proof}
Les hypoth\`eses faites sur $2D-G$ et $D-Q$ signifient pr\'ecis\'ement~:
\beq
\label{l(2D-G)=0}
l(2D-G)=0
\eeq
et
\beq
\label{l(D-Q)=deg(D-Q)+1-g}
l(D-Q)=\deg(D-Q)+1-g.
\eeq
Alors $D\geq D-Q$ et \eqref{l(D-Q)=deg(D-Q)+1-g}
impliquent $l(D)=\deg(D)+1-g$.
Par ailleurs, $\ker\ev_{Q,D}=\cL(D-Q)$, de sorte que
\beq
\dim\im\ev_{Q,D}=l(D)-l(D-Q)=\deg(Q)=k,
\eeq
donc $\ev_{Q,D}$ est bien surjective.
Enfin, $\ker\ev_{G,2D}=\cL(2D-G)=\{0\}$ par \eqref{l(2D-G)=0}, donc $\ev_{G,2D}$
est bien injective.
\end{proof}

\begin{corollaire}
\label{criterefin}
Soit $X$ une courbe de genre $g\geq1$ sur un corps $K$ parfait,
munie d'un point ferm\'e $Q$ de corps r\'esiduel $L=K(Q)$ de degr\'e
$k=[L:K]>1$,
et d'un ensemble de points rationnels $\cS\subset X(K)$.
Soit $n$ un entier naturel tel que
\beq
\label{k+g-1<d<(n+g-1)/2}
2k+g-1\leq n\leq |X(K)|
\eeq
et
\beq
\label{S>max(1,2)}
|\cS|>g+f_{2,X}(2k+2g-4-n).
\eeq
Alors il existe 
des diviseurs 
$D$ de degr\'e $d=k+g-1$, \`a support dans $\cS$,
et $G$ de degr\'e $n$ sur $X$, tels que $X,G,D,Q$
v\'erifient les hypoth\`eses du th\'eor\`eme~\ref{thChCh}.
En particulier, on a $\mu_K(L)\leq n$.
\end{corollaire}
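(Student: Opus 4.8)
L'idée est de ramener l'énoncé à la proposition~\ref{criteremu}, en construisant $D$ et $G$ au moyen de la proposition~\ref{prop-constr}. Je poserais $d=k+g-1$ et $d_0=k-1$, puis je choisirais $n$ points rationnels $P_1,\dots,P_n\in X(K)$ deux à deux distincts avec $P_1\in\cS$~: c'est possible car $n\leq|X(K)|$ par~\eqref{k+g-1<d<(n+g-1)/2} et $\cS$ est non vide, puisque $|\cS|>g\geq1$ par~\eqref{S>max(1,2)}. On prend alors $G=P_1+\cdots+P_n$ et $D_0=(k-1)P_1$. Comme $Q$ est de degré $k>1$, ce n'est pas un point rationnel, donc $Q$ ne figure dans le support d'aucun diviseur porté par $\cS$~; la condition ``$Q\notin\Supp D$'' requise par le théorème~\ref{thChCh} sera ainsi automatiquement satisfaite.

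J'appliquerais ensuite la proposition~\ref{prop-constr} avec $r=2$, $(s_1,T_1)=(2,G)$ et $(s_2,T_2)=(1,Q)$, de sorte que $t_1=n$ et $t_2=k$. Le diviseur $D_0$ convient~: on a $2d_0-n=2k-2-n<0$ et $d_0-k=-1<0$, donc $2D_0-G$ et $D_0-Q$ sont de degré strictement négatif, donc ordinaires. Le point crucial est la vérification de~\eqref{borneS}, qui s'écrit ici
\beq
|\cS|>\max_{k-1\leq d'\leq k+g-2}\bigl(f_{2,X}(2d'-n)+f_{1,X}(d'-k)\bigr).
\eeq
Or, grâce à l'inégalité de gauche de~\eqref{k+g-1<d<(n+g-1)/2}, le plus grand argument $2d'-n$ intervenant, à savoir $2k+2g-4-n$, vérifie $2k+2g-4-n\leq g-3$~; tous ces arguments restent donc dans la plage de croissance de $f_{2,X}$ donnée par le lemme~\ref{ptesdiverses}.a, et $f_{2,X}(2d'-n)$ croît avec $d'$. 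Enfin $f_{1,X}(d'-k)$ vaut $1$ en $d'=k-1$ puis $g$ pour $k\leq d'\leq k+g-2$~; comme le terme $f_{2,X}$ est croissant, le maximum est atteint à l'extrémité $d'=k+g-2$, où $f_{1,X}(g-2)=g$, et vaut $g+f_{2,X}(2k+2g-4-n)$, soit précisément le membre de droite de~\eqref{S>max(1,2)}. Ainsi~\eqref{borneS} est satisfaite.

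La proposition~\ref{prop-constr} fournit alors un diviseur $D$ de degré $d=k+g-1$, avec $D-D_0$ effectif à support dans $\cS$ — donc $D$ lui-même à support dans $\cS$ — tel que $2D-G$ et $D-Q$ soient ordinaires. Il ne reste qu'à vérifier les hypothèses de la proposition~\ref{criteremu}~: la condition a) est acquise par construction, et la condition b) résulte de $2d-n=2k+2g-2-n\leq g-1$ (toujours par~\eqref{k+g-1<d<(n+g-1)/2}) et de $d-k=g-1$. On en déduit que $X,G,D,Q$ satisfont aux hypothèses du théorème~\ref{thChCh}, d'où $\mu_K(L)\leq n$.

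La principale difficulté est l'identification du maximum dans~\eqref{borneS} avec le membre de droite de~\eqref{S>max(1,2)}~: elle repose entièrement sur la monotonie de $f_{2,X}$ (lemme~\ref{ptesdiverses}.a), laquelle n'est garantie que parce que l'inégalité $2k+g-1\leq n$ maintient tous les arguments en deçà de $g-3$. Sans cette borne, $f_{2,X}$ pourrait décroître, voire s'annuler au-delà de $g-2$, et le maximum ne serait plus nécessairement atteint à l'extrémité $d'=k+g-2$~; c'est donc là que se concentre l'essentiel de la vérification.
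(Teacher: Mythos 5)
Votre preuve est correcte et suit essentiellement la même démarche que celle de l'article~: même choix de $G=P_1+\cdots+P_n$ avec $P_1\in\cS$, de $D_0$ (le $d_0=\min(k-1,\lfloor\frac{n-1}{2}\rfloor)$ de l'article coïncide avec votre $k-1$ sous l'hypothèse $n\geq 2k+g-1$), même application de la proposition~\ref{prop-constr} suivie de la proposition~\ref{criteremu}, et même identification du maximum dans~\eqref{borneS} avec $g+f_{2,X}(2k+2g-4-n)$ via la monotonie du lemme~\ref{ptesdiverses}.a. Vous explicitez simplement davantage les vérifications (ordinarité de $D_0$, plage de croissance de $f_{2,X}$) que l'article laisse implicites.
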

\begin{proof}
Choisissons $P_1,\dots,P_n\in X(K)$ deux \`a deux distincts,
avec $P_1\in\cS$,
et appliquons la proposition~\ref{prop-constr}
avec
\begin{itemize}
\item $r=2$
\item $s_1=1$
\item $T_1=Q$, \ $t_1=k$
\item $s_2=2$
\item $T_2=G=P_1+\cdots+P_n$, \ $t_2=n$
\item $D_0=d_0P_1$, \ $d_0=\min(k-1,\lfloor\frac{n-1}{2}\rfloor)$
\item $d=k+g-1$.
\end{itemize}
Pour tout entier $d'$ posons
\beq
f(d')=f_{1,X}(d'-k)+f_{2,X}(2d'-n).
\eeq
La proposition~\ref{prop-constr} s'applique
d\`es lors que, pour tout $d'$ tel que $d_0\leq d'<d$, on a
\beq
\label{S>f(d')}
|\cS|>f(d').
\eeq
Or par \eqref{S>max(1,2)} on a
$|\cS|>g+f_{2,X}(2k+2g-4-n)=f(d-1)$, donc par le lemme~\ref{ptesdiverses}.a,
\eqref{S>f(d')} est bien vraie pour tout $d'<d$.

La proposition~\ref{prop-constr} fournit alors $D$ de degr\'e $d$,
\`a support dans $\cS$, tel que $2D-G$ et $D-Q$ soient ordinaires.
Puisque $k>1$, le support de $D$ ne rencontre pas $Q$.
Enfin, par \eqref{k+g-1<d<(n+g-1)/2}, on a $2d-n\leq g-1=d-k$,
et on peut alors conclure gr\^ace \`a la proposition~\ref{criteremu}.
\end{proof}

\begin{corollaire}
Soient $K$ un corps parfait, et $\kappa>0$ et $\nu>2$ deux r\'eels.
On supposera que $\kappa$ et $\nu$ v\'erifient la condition suivante~:
\beq
\label{kappanusimple}
\kappa<
\begin{cases}
\frac{\nu-2}{2} & \textrm{si $\nu\leq 4$}\\
\nu-3 & \textrm{si $4<\nu< 5$}\\
\frac{\nu-1}{2} & \textrm{si $\nu\geq 5$.}
\end{cases}
\eeq
Pour tout entier $j\geq1$, on se donne une courbe $X^{(j)}$ de genre
$g^{(j)}$ sur $K$,
munie d'un point ferm\'e $Q^{(j)}$ de corps r\'esiduel $L^{(j)}=K(Q^{(j)})$
de degr\'e \mbox{$k^{(j)}=[L^{(j)}:K]$},
et on suppose que, lorsque $j$ tend vers l'infini,
on a $g^{(j)}\longto\infty$,
\beq
\liminf\frac{|X^{(j)}(K)|}{g^{(j)}}\geq\nu,
\eeq
et
\beq
\frac{k^{(j)}}{g^{(j)}}\longto\kappa.
\eeq
Alors, pour tout $j$ assez grand, il existe des diviseurs
$D^{(j)}$, $G^{(j)}$ sur $X^{(j)}$ tels que $X^{(j)},G^{(j)},D^{(j)},Q^{(j)}$
v\'erifient les hypoth\`eses du th\'eor\`eme~\ref{thChCh},
et fournissent un algorithme de multiplication sym\'etrique
de longueur $n^{(j)}$ pour $L/K$, avec
\beq
\frac{n^{(j)}}{g^{(j)}}\longto\nu
\eeq
quand $j$ tend vers l'infini. En particulier, on a
\beq
\limsup_{j\to\infty}\frac{1}{k^{(j)}}\mu_K(L^{(j)})\leq\frac{\nu}{\kappa}.
\eeq

Lorsque $|K|=q<+\infty$, la conclusion reste valide en rempla\c{c}ant
la condition \eqref{kappanusimple} par la condition plus faible
\beq
\label{kappanuq}
\kappa<
\begin{cases}
\frac{\nu-2}{2} & \textrm{si $\nu\leq 4-\frac{10q^2-2}{q^4+2q^2-1}$}\\
\left(1-\frac{1}{q^2}\right)^{-2}\nu-3\left(1-\frac{1}{q^2}\right)^{-1} & \textrm{si $4-\frac{10q^2-2}{q^4+2q^2-1}<\nu< 5-\frac{14q^2-4}{q^4+2q^2-1}$}\\
\frac{\nu-1}{2} & \textrm{si $\nu\geq 5-\frac{14q^2-4}{q^4+2q^2-1}$.}
\end{cases}
\eeq
\end{corollaire}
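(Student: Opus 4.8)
The plan is to reduce everything to Corollaire~\ref{criterefin}, applied at each level $j$ with $\cS=X^{(j)}(K)$ and a well-chosen length $n^{(j)}$, exactly in the spirit in which the asymptotic corollary on intersecting codes was deduced from its finite-level predecessor. Since $\kappa>0$ and $g^{(j)}\to\infty$, the relation $k^{(j)}/g^{(j)}\to\kappa$ forces $k^{(j)}\to\infty$, so the standing hypotheses $g\geq1$ and $k>1$ of Corollaire~\ref{criterefin} hold for $j$ large. It then remains to verify, asymptotically, its two numerical hypotheses \eqref{k+g-1<d<(n+g-1)/2} and \eqref{S>max(1,2)}, and to feed the resulting length into a diagonal argument.

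First I would introduce, for each small $\epsilon>0$, the value $\nu_\epsilon=\nu-\epsilon$, and choose integers $n_\epsilon^{(j)}$ with $n_\epsilon^{(j)}/g^{(j)}\to\nu_\epsilon$. Since $\nu_\epsilon<\nu\leq\liminf|X^{(j)}(K)|/g^{(j)}$, one has $n_\epsilon^{(j)}\leq|X^{(j)}(K)|$ for $j$ large, which is the upper part of \eqref{k+g-1<d<(n+g-1)/2}; the lower part $2k^{(j)}+g^{(j)}-1\leq n_\epsilon^{(j)}$ reduces, after dividing by $g^{(j)}$, to $2\kappa+1\leq\nu_\epsilon$, i.e. $\kappa<\tfrac{\nu-1}{2}$ for $\epsilon$ small. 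I would observe that \eqref{kappanusimple} implies $\kappa<\tfrac{\nu-1}{2}$ in each of its three cases (for $\nu\leq4$ because $\tfrac{\nu-2}{2}<\tfrac{\nu-1}{2}$; for $4<\nu<5$ because $\nu-3<\tfrac{\nu-1}{2}$; and directly when $\nu\geq5$), so this part is automatic.

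The heart of the argument is condition \eqref{S>max(1,2)} with $\cS=X^{(j)}(K)$. Writing $a^{(j)}=2k^{(j)}+2g^{(j)}-4-n_\epsilon^{(j)}$, one has $a^{(j)}/g^{(j)}\to\alpha_\epsilon=2\kappa+2-\nu_\epsilon$, and, choosing $\epsilon$ so that $\alpha_\epsilon\notin\{0,1\}$, Lemme~\ref{ptesdiverses}.b gives $\limsup f_{2,X^{(j)}}(a^{(j)})/g^{(j)}\leq\phi_{2,\nu}(\alpha_\epsilon)$. Hence \eqref{S>max(1,2)} holds for $j$ large as soon as $\nu>1+\phi_{2,\nu}(\alpha_\epsilon)$, and the crux is to match this with \eqref{kappanusimple} by examining the limiting exponent $\alpha_0=2\kappa+2-\nu$. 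When $\alpha_0<0$ (i.e. $\kappa<\tfrac{\nu-2}{2}$) one has $\phi_{2,\nu}=0$ and only needs $\nu>1$; when $0\leq\alpha_0<1$ the infinite-field value $\phi_{2,\nu}(\alpha_0)=3+\alpha_0$ from \eqref{defphi2nu} turns the inequality into $\kappa<\nu-3$; the value $\alpha_0=1$ is excluded by the strict bound $\kappa<\tfrac{\nu-1}{2}$ already obtained. Combining these regimes with $\kappa<\tfrac{\nu-1}{2}$ reproduces exactly the three-case bound \eqref{kappanusimple}, the thresholds $\nu=4$ and $\nu=5$ being where $\tfrac{\nu-2}{2}$, $\nu-3$ and $\tfrac{\nu-1}{2}$ cross.

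For $|K|=q<\infty$ I would redo this last step with the finite-field branch of $\phi_{2,\nu}$ in \eqref{defphi2nu}: in the regime $0\leq\alpha_0<1$ the inequality $\nu>1+\phi_{2,\nu}(\alpha_0)$ becomes, after clearing denominators, the sharper bound $\kappa<(1-q^{-2})^{-2}\nu-3(1-q^{-2})^{-1}$, and the two crossing thresholds move from $4,5$ to $4-\tfrac{10q^2-2}{q^4+2q^2-1}$ and $5-\tfrac{14q^2-4}{q^4+2q^2-1}$, giving \eqref{kappanuq}. Once both hypotheses are secured, Corollaire~\ref{criterefin} furnishes, for each fixed $\epsilon$ and $j$ large, divisors $D^{(j)},G^{(j)}$ meeting the hypotheses of Th\'eor\`eme~\ref{thChCh}, hence a symmetric algorithm of length $n_\epsilon^{(j)}$, so that $\limsup_j\mu_K(L^{(j)})/k^{(j)}\leq\nu_\epsilon/\kappa$. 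Letting $\epsilon$ run through a sequence tending to $0$ and diagonalizing (as at the end of the intersecting-codes corollary) yields a single sequence $n^{(j)}$ with $n^{(j)}/g^{(j)}\to\nu$ and $\limsup_j\mu_K(L^{(j)})/k^{(j)}\leq\nu/\kappa$. The main obstacle is precisely this case analysis: checking that \eqref{kappanusimple}, and with the more delicate finite-field computation \eqref{kappanuq}, is exactly the region in which both constraints of Corollaire~\ref{criterefin} survive in the limit, while correctly handling the $\liminf$ hypothesis on $|X(K)|$, the genuine limit hypothesis on $k$, and the boundary values $\alpha_0\in\{0,1\}$ through the $\nu_\epsilon$-shift and the diagonalization.
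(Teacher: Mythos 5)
Your proposal is correct and takes essentially the same approach as the paper: reduction to Corollaire~\ref{criterefin} applied with $\cS=X^{(j)}(K)$, asymptotic verification of its two hypotheses via Lemme~\ref{ptesdiverses}.b through the conditions $2\kappa+1<\nu$ and $\nu>1+\phi_{2,\nu}(2\kappa+2-\nu)$, whose case analysis (including your finite-field computation of the thresholds $4-\frac{10q^2-2}{q^4+2q^2-1}$ and $5-\frac{14q^2-4}{q^4+2q^2-1}$, which checks out) reproduces exactly \eqref{kappanusimple} and \eqref{kappanuq}. The only deviation is that the paper chooses $n^{(j)}$ directly with $n^{(j)}/g^{(j)}\longto\nu$ and applies the lemma once, whereas you import the $\epsilon$-shift and diagonalization from the intersecting-codes corollary --- a harmless variant which, incidentally, treats the boundary value $2\kappa+2-\nu=0$ (where the paper invokes Lemme~\ref{ptesdiverses}.b outside its stated hypothesis $\alpha\notin\{0,1\}$) more scrupulously.
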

\begin{proof}
Compte tenu de la d\'efinition  \eqref{defphi2nu} de $\phi_{2,\nu}$,
on remarque que la condition \eqref{kappanusimple},
ou la condition \eqref{kappanuq} si $K$ est fini,
implique~:
\beq
\label{2kappa+1<nu}
2\kappa+1<\nu
\eeq
et
\beq
\label{nu>1+phi}
\nu>1+\phi_{2,\nu}(2\kappa+2-\nu).
\eeq
Pour tout $j$ assez grand, choisissons un entier $n^{(j)}\leq|X^{(j)}(K)|$,
en faisant en sorte qu'\`a l'infini
\beq
\frac{n^{(j)}}{g^{(j)}}\longto\nu.
\eeq
Alors pour $j$ assez grand on aura 
\beq
2k^{(j)}+g^{(j)}-1\leq n^{(j)}
\eeq
par \eqref{2kappa+1<nu},
et
\beq
|X^{(j)}(K)|>g^{(j)}+f_{2,X^{(j)}}(2k^{(j)}+2g^{(j)}-4-n^{(j)})
\eeq
par \eqref{nu>1+phi} et le lemme~\ref{ptesdiverses}.b.
On peut alors appliquer le corollaire pr\'ec\'edent
(avec $\cS=X^{(j)}(K)$), ce qui permet de conclure.
\end{proof}

Comme dans la section pr\'ec\'edente,
on note $A(q)$ le plus grand r\'eel tel
qu'il existe une suite de courbes $X^{(j)}$ de genre $g^{(j)}$ sur $\F_q$,
avec $g^{(j)}\longto\infty$ et $\frac{|X^{(j)}(K)|}{g^{(j)}}\longto A(q)$
lorsque $j$ tend vers l'infini.

On note aussi $A'(q)$ le plus grand r\'eel tel
qu'il existe une suite de courbes $X^{(j)}$ de genre $g^{(j)}$ sur $\F_q$,
avec $g^{(j)}\longto\infty$ et $\frac{|X^{(j)}(K)|}{g^{(j)}}\longto A'(q)$
lorsque $j$ tend vers l'infini,
et avec la condition suppl\'ementaire
\beq
\frac{g^{(j+1)}}{g^{(j)}}\longto 1.
\eeq

\begin{corollaire}
\label{ShpaTsaVla}
Si $A(q)\geq 5-\frac{14q^2-4}{q^4+2q^2-1}$, on a
\beq
\label{muinf}
m_q\leq 2\left(1+\frac{1}{A(q)-1}\right).
\eeq
Si $A'(q)\geq 5-\frac{14q^2-4}{q^4+2q^2-1}$, on a
\beq
\label{musup}
M_q\leq 2\left(1+\frac{1}{A'(q)-1}\right).
\eeq
En particulier, si $q\geq 49$ est un carr\'e, on a
\beq
M_q\leq 2\left(1+\frac{1}{\sqrt{q}-2}\right).
\eeq
\end{corollaire}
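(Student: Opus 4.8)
Le plan est de d\'eduire les deux premi\`eres bornes du corollaire asymptotique pr\'ec\'edent, appliqu\'e \`a des familles de courbes r\'ealisant respectivement $A(q)$ et $A'(q)$, en optimisant le choix de $\kappa$. La cl\'e est la suivante~: d\`es que $\nu\geq 5-\frac{14q^2-4}{q^4+2q^2-1}$, la condition \eqref{kappanuq} autorise tout $\kappa<\frac{\nu-1}{2}$, et la conclusion du corollaire pr\'ec\'edent fournit une majoration par $\frac{\nu}{\kappa}$~; en faisant tendre $\kappa$ vers $\frac{\nu-1}{2}$, on obtient $\frac{\nu}{\kappa}\longto\frac{2\nu}{\nu-1}=2\left(1+\frac{1}{\nu-1}\right)$, soit exactement le membre de droite cherch\'e. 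C'est d'ailleurs ce qui explique la valeur seuil de l'hypoth\`ese. Toute la subtilit\'e consistera donc \`a fabriquer, sur les courbes consid\'er\'ees, des points ferm\'es de degr\'e convenable.

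Pour \eqref{muinf}, on partirait d'une suite de courbes $X^{(j)}$ sur $\F_q$ avec $g^{(j)}\longto\infty$ et $\frac{|X^{(j)}(\F_q)|}{g^{(j)}}\longto A(q)$, et on poserait $\nu=A(q)$, puis on fixerait $\kappa<\frac{\nu-1}{2}$. Il s'agit de trouver sur chaque $X^{(j)}$ un point ferm\'e $Q^{(j)}$ de degr\'e $k^{(j)}$ avec $\frac{k^{(j)}}{g^{(j)}}\longto\kappa$. L'existence d'un tel point r\'esulte d'une minoration classique du nombre $B_d$ de points ferm\'es de degr\'e $d$~: de $\sum_{e\mid d}eB_e=|X(\F_{q^d})|$ et de la borne de Weil on tire $dB_d\geq q^d-2g^{(j)}q^{d/2}-O(q^{d/2})$, quantit\'e strictement positive d\`es que $q^{d/2}$ d\'epasse un multiple de $d$, ce qui est le cas pour $d\approx\kappa g^{(j)}$ et $j$ grand (puisqu'alors $g^{(j)}=O(d)$). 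Le corollaire pr\'ec\'edent, appliqu\'e \`a la suite $(X^{(j)},Q^{(j)})_j$, donne alors $m_q\leq\limsup_j\frac{\mu_q(k^{(j)})}{k^{(j)}}\leq\frac{\nu}{\kappa}$ (on utilise ici que $m_q$, \'etant une $\liminf$ sur tous les $k$, est major\'ee par la $\liminf$, donc par la $\limsup$, de toute sous-suite). On conclut en faisant tendre $\kappa$ vers $\frac{\nu-1}{2}$.

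Pour \eqref{musup}, la majoration porte sur une $\limsup$~: il faut contr\^oler $\frac{\mu_q(k)}{k}$ pour \emph{tout} $k$ grand, et non seulement le long d'une sous-suite. C'est ici qu'intervient la condition suppl\'ementaire $\frac{g^{(j+1)}}{g^{(j)}}\longto1$ d\'efinissant $A'(q)$. Partant d'une suite r\'ealisant $A'(q)$ et posant $\nu=A'(q)$, $\kappa<\frac{\nu-1}{2}$, on proc\'ederait ainsi~: pour chaque $k$ grand, choisir $j(k)$ comme le plus grand indice tel que $g^{(j(k))}\leq k/\kappa$~; alors $\frac{k/\kappa}{g^{(j(k))}}\in\bigl[1,\frac{g^{(j(k)+1)}}{g^{(j(k))}}\bigr)$, et comme ce dernier quotient tend vers $1$ (et $j(k)\longto\infty$), on obtient $\frac{k}{g^{(j(k))}}\longto\kappa$. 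Sur la courbe $X^{(j(k))}$, dont le genre est $\Theta(k)$, la minoration de $B_k$ ci-dessus fournit un point ferm\'e de degr\'e \emph{exactement} $k$ pour tout $k$ assez grand. Appliquant le corollaire pr\'ec\'edent \`a la suite ainsi obtenue, index\'ee par $k$, on aurait $M_q\leq\frac{\nu}{\kappa}$, puis on ferait tendre $\kappa$ vers $\frac{\nu-1}{2}$. La principale difficult\'e est pr\'ecis\'ement cette \'etape de recouvrement~: garantir qu'\`a chaque grand $k$ on sait associer une courbe de genre $\approx k/\kappa$ \emph{munie} d'un point ferm\'e de degr\'e exactement $k$, ce qui force \`a disposer d'une famille dont les genres croissent \og sans trou \fg{} (d'o\`u le recours \`a $A'(q)$ plut\^ot qu'\`a $A(q)$).

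Enfin, la derni\`ere assertion d\'ecoulerait de \eqref{musup} appliqu\'ee avec $A'(q)=\sqrt{q}-1$. Pour $q$ carr\'e, on sait qu'il existe des familles de courbes atteignant la borne de Drinfeld-Vladut, \cad $\frac{|X(\F_q)|}{g}\longto\sqrt{q}-1$, et dont les genres v\'erifient de surcro\^it $\frac{g^{(j+1)}}{g^{(j)}}\longto1$, d'o\`u $A'(q)\geq\sqrt{q}-1$. Il reste \`a v\'erifier que l'hypoth\`ese de \eqref{musup} est satisfaite, \cad que $\sqrt{q}-1\geq5-\frac{14q^2-4}{q^4+2q^2-1}$ d\`es que $q\geq49$~; c'est imm\'ediat, car le membre de droite reste strictement inf\'erieur \`a $5$ tandis que $\sqrt{q}-1\geq6$ pour $q\geq49$ (c'est aussi ce qui motive la restriction $q\geq49$, puisque pour $q=25$ on aurait $\sqrt{q}-1=4<5$). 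En substituant $A'(q)=\sqrt{q}-1$ dans $2\left(1+\frac{1}{A'(q)-1}\right)$, on trouve bien $2\left(1+\frac{1}{\sqrt{q}-2}\right)$.
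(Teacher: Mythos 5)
Your proof is correct and takes essentially the same approach as the paper's: you apply the preceding corollary with $\nu=A(q)$ (resp. $\nu=A'(q)$) and $\kappa$ tending to $\frac{\nu-1}{2}$, obtain closed points of the prescribed degree from the Weil bound (the paper simply cites Stichtenoth, Cor.~V.2.10.c, where you rederive the estimate), and use the slow genus growth defining $A'(q)$ to attach to every large $k$ a curve of genus $\approx k/\kappa$ carrying a point of degree exactly $k$, which is precisely the paper's choice of the index $j_k$. The only differences are cosmetic (fixing $\kappa<\frac{\nu-1}{2}$ and letting $\kappa$ tend to the endpoint, instead of the paper's $\kappa_\epsilon=\frac{\nu-1-\epsilon}{2}$ with $\epsilon\to0$, and selecting $j(k)$ as the largest index with $g^{(j)}\leq k/\kappa$ rather than the smallest index satisfying the paper's inequality).
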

\begin{proof}
Consid\'erons une suite de courbes $X^{(j)}$ sur $\F_q$, de genre $g^{(j)}$
tendant vers l'infini, avec $\frac{|X^{(j)}(K)|}{g^{(j)}}\longto A(q)$.
Posons $\kappa_\epsilon=\frac{A(q)-1-\epsilon}{2}$, de sorte que
$\kappa_\epsilon>0$ si $\epsilon>0$ est choisi assez petit.
Pour tout $j$ assez grand, choisissons un entier $k^{(j)}$,
en faisant en sorte que $\frac{k^{(j)}}{g^{(j)}}\longto\kappa_\epsilon$
quand $j$ tend vers l'infini. Par la borne de Weil, si $j$ est assez
grand, $X^{(j)}$ admet au moins un point $Q^{(j)}$ de degr\'e
exactement $k^{(j)}$ (cf.~\cite{Stichtenoth} Cor.~V.2.10.c).
On peut alors appliquer le corollaire pr\'ec\'edent avec
$\kappa=\kappa_\epsilon$ et $\nu=A(q)$, ce qui donne
\beq
m_q\leq\limsup_{j\to\infty}\frac{1}{k^{(j)}}\mu_q(k^{(j)})\leq\frac{\nu}{\kappa_\epsilon}=2\left(1+\frac{1+\epsilon}{A(q)-1-\epsilon}\right),
\eeq
et puisque ceci vaut pour tout $\epsilon$, on en d\'eduit \eqref{muinf}.

Consid\'erons maintenant
une suite de courbes $X^{(j)}$ sur $\F_q$, de genre $g^{(j)}$
tendant vers l'infini, avec $\frac{|X^{(j)}(K)|}{g^{(j)}}\longto A'(q)$,
et 
\beq
\label{croissancelente}
\frac{g^{(j+1)}}{g^{(j)}}\longto 1.
\eeq
Posons $\kappa_\epsilon=\frac{A'(q)-1-\epsilon}{2}$, de sorte que
$\kappa_\epsilon>0$ si $\epsilon>0$ est choisi assez petit.
Pour tout entier $k$ assez grand, notons $j_k$ le plus petit entier
tel que
\beq
2k+(1+\epsilon)g^{(j_k)}-1\leq |X^{(j_k)}(K)|.
\eeq
Alors, gr\^ace \`a \eqref{croissancelente},
on a
\beq
\frac{k}{g^{(j_k)}}\longto \kappa_\epsilon
\eeq
quand $k$ tend vers l'infini.
De m\^eme que pr\'ec\'edemment, si $k$ est assez grand,
$X^{(j_k)}$ admet un point $Q^{(k)}$ de degr\'e exactement $k$,
et on peut appliquer le corollaire pr\'ec\'edent pour trouver
\beq
M_q=\limsup_{k\to\infty}\frac{1}{k}\mu_q(k)\leq\frac{\nu}{\kappa_\epsilon}=2\left(1+\frac{1+\epsilon}{A'(q)-1-\epsilon}\right),
\eeq
et on conclut en faisant tendre $\epsilon$ vers $0$.

La derni\`ere assertion r\'esulte du fait que, si $q$ est un carr\'e,
on a $A(q)=A'(q)=\sqrt{q}-1$ (voir \cite{STV}, ``Claim'' p.~163).
\end{proof}

On a ainsi d\'emontr\'e le r\'esultat principal de \cite{STV},
dont la preuve \'etait incompl\`ete (comme indiqu\'e dans \cite{Cascudo}).
Par ailleurs, notre m\'ethode a l'avantage d'\^etre constructive, du moins
sous l'hypoth\`ese que l'on sait construire explicitement les courbes
$X$, \emph{ainsi que les points $Q$ sur ces courbes}, qui
interviennent dans la preuve
(il serait par ailleurs int\'eressant d'\'etudier si une telle
construction ne peut pas s'obtenir par une modification simple
d'autres m\'ethodes de construction de courbes d\'ej\`a connues)~;
un tel couple $(X,Q)$ \'etant
donn\'e, les propositions~\ref{prop-constr} et~\ref{criteremu}
permettent en effet de construire un algorithme de multiplication
pour $K(Q)/K$
en temps polyn\^omial en le genre $g$ de $X$, qui approche la limite
donn\'ee dans le dernier corollaire quand $g$ tend vers l'infini.
Ceci r\'epond (de mani\`ere positive) \`a la remarque finale de \cite{STV}.

\section{\'Epilogue}

Il est possible de raffiner les r\'esultats obtenus dans la section
pr\'ec\'edente, au moins dans deux directions.

Tout d'abord, on peut vouloir non pas une borne sur $m_q$ ou $M_q$,
asymptotique, mais une borne explicite sur $\mu_q(k)$ pour toute valeur
de $k$. 
Suivant la m\'ethode de \cite{Ballet1999}\cite{Ballet2003}\cite{BC},
c'est possible d\`es lors qu'on dispose de familles de courbes dont
on sache pr\'ecis\'ement minorer le nombre de points et
majorer la croissance du genre.
Par exemple, le corollaire~\ref{ShpaTsaVla} montre que, si $q\geq49$
est un carr\'e, alors pour tout $\epsilon>0$ il existe un entier $k_\epsilon$
tel que pour tout $k\geq k_\epsilon$ on ait
\beq
\frac{1}{k}\mu_q(k)\leq2\left(1+\frac{1+\epsilon}{\sqrt{q}-2}\right).
\eeq
En \'etant plus soigneux dans la preuve, on peut esp\'erer
une estimation explicite de ce $k_\epsilon$.

Par ailleurs, on peut aussi vouloir
combiner notre construction avec des variantes
de l'algorithme de Chudnovsky-Chudnovsky par
interpolation en des points de degr\'e sup\'erieur, ou avec
multiplicit\'es (\cite{BR0}\cite{Arnaud}\cite{CO}). Ceci est
int\'eressant notamment pour obtenir des informations sur les
$\mu_q(k)$ lorsque $q$ n'est pas un carr\'e.

On indique ici quelques raffinements de ce type, sans chercher
\`a \^etre exhaustif.
Par souci de simplicit\'e, on ne cherchera pas non plus \`a exploiter
les r\'esultats des sections~\ref{section1} et~\ref{section2}
dans toute leur g\'en\'eralit\'e~; la suite de l'expos\'e reposera
uniquement sur le r\'esultat suivant, qui est une version faible
de la proposition~\ref{prop-constr}~:

\begin{proposition}
\label{prop-constr-faible}
Soient $X$ une courbe de genre $g$ sur
un corps fini $\F_q$, et $r\geq 1$ un entier.
On suppose donn\'es des entiers $s_1,\dots,s_r\in\{1,2\}$ et
des diviseurs $\F_q$-rationnels $T_1,\dots,T_r$ sur $X$.
On suppose enfin
\beq
|X(\F_q)|>\sum_{i=1}^r(s_i)^2g.
\eeq
Alors,
pour tout entier $d$,
il existe un diviseur $\F_q$-rationnel $D$ sur $X$,
de degr\'e $\deg D=d$, \`a support dans $X(\F_q)$, tel que les
$s_iD-T_i$ soient ordinaires.
\end{proposition}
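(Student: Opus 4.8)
Le plan est de d\'eduire cet \'enonc\'e de la proposition~\ref{prop-constr}, en prenant tout simplement $\cS=X(\F_q)$. Rappelons d'abord qu'un corps fini est parfait, de sorte que l'hypoth\`ese de perfection requise par la proposition~\ref{prop-constr} est automatiquement satisfaite. Tout le travail consiste donc \`a fabriquer un diviseur de d\'epart $D_0$ convenable, puis \`a montrer que la condition \eqref{borneS} d\'ecoule de l'hypoth\`ese num\'erique $|X(\F_q)|>\sum_{i=1}^r s_i^2g$.

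Je commencerais par observer que $X(\F_q)$ est non vide~: l'hypoth\`ese donne $|X(\F_q)|>\sum_{i=1}^r s_i^2g\geq0$, donc $|X(\F_q)|\geq1$. Je fixerais alors un point $P_1\in X(\F_q)$~; comme $P_1$ est de degr\'e $1$, le diviseur $mP_1$ est $\F_q$-rationnel, de degr\'e $m$ et \`a support dans $X(\F_q)$, et ce pour tout entier relatif $m$ (y compris n\'egatif). Pour obtenir un $D_0$ admissible malgr\'e le caract\`ere arbitraire de $d$, je poserais
\[
d_0=\min\left(d,\;\min_{1\leq i\leq r}\left\lfloor\frac{t_i-1}{s_i}\right\rfloor\right),\qquad D_0=d_0P_1.
\]
Par construction $d_0\leq d$ et, pour chaque $i$, $s_id_0-t_i\leq-1<0$, donc $l(s_iD_0-T_i)=0$ et les $s_iD_0-T_i$ sont ordinaires~: c'est pr\'ecis\'ement le crit\`ere indiqu\'e dans la proposition~\ref{prop-constr}.

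Je v\'erifierais ensuite la condition \eqref{borneS} gr\^ace au lemme~\ref{ptesdiverses}.a, selon lequel le maximum sur $\Z$ de chaque fonction $f_{s_i,X}$ vaut $s_i^2g$. On dispose donc, \emph{uniform\'ement} en $d'$, de la majoration
\[
\sum_{i=1}^r f_{s_i,X}(s_id'-t_i)\;\leq\;\sum_{i=1}^r s_i^2g\;<\;|X(\F_q)|=|\cS|,
\]
qui entra\^ine \eqref{borneS} apr\`es passage au maximum sur $d_0\leq d'<d$. La proposition~\ref{prop-constr} fournit alors un diviseur $\F_q$-rationnel $D$ de degr\'e $d$ rendant tous les $s_iD-T_i$ ordinaires, avec $D-D_0$ effectif et \`a support dans $\cS=X(\F_q)$~; comme $D_0=d_0P_1$ est lui-m\^eme \`a support dans $X(\F_q)$, il en va de m\^eme de $D$, ce qui conclurait la preuve.

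La seule subtilit\'e r\'eside dans le premier pas~: puisque $d$ est arbitraire (et peut \^etre tr\`es n\'egatif), il faut garantir l'existence d'un diviseur initial $D_0$ de degr\'e $d_0\leq d$ rendant d\'ej\`a les $s_iD_0-T_i$ ordinaires, ce que le choix de $d_0$ comme minimum ci-dessus assure. Le reste n'est qu'une application directe du lemme~\ref{ptesdiverses}.a, dont la borne $s_i^2g$ est exactement taill\'ee pour absorber la somme figurant dans \eqref{borneS}.
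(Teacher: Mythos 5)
Votre preuve est correcte et suit exactement la même démarche que celle du papier, qui se réduit à la phrase « on applique la proposition~\ref{prop-constr} avec $\cS=X(\F_q)$, et le lemme~\ref{ptesdiverses}.a ». Vous ne faites qu'expliciter les détails laissés implicites (choix de $D_0=d_0P_1$ avec $d_0=\min\bigl(d,\min_i\lfloor(t_i-1)/s_i\rfloor\bigr)$, vérification de \eqref{borneS} via la borne uniforme $f_{s,X}\leq s^2g$, et localisation du support), ce qui est exactement ce que l'auteur avait en tête.
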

\begin{proof}
On applique la proposition~\ref{prop-constr} avec $\cS=X(\F_q)$,
et le lemme~\ref{ptesdiverses}.a.
\end{proof}

Remontant en arri\`ere, on observe que ce r\'esultat
n'utilise pas le lemme~\ref{lemme2} dans toute sa g\'en\'eralit\'e,
mais uniquement \`a travers la formule~\eqref{miniPlucker2}.
Ainsi si l'on s'int\'eresse uniquement aux r\'esultats de cette
section, il est possible de simplifier consid\'erablement les
preuves et les calculs.

\begin{corollaire}
\label{constrD-Qet2D-G}
Soient $X$ une courbe de genre $g$ sur
un corps fini $\F_q$, munie de deux diviseurs $\F_q$-rationnels
$Q$ et $G$. On note $k=\deg Q$ et $n=\deg G$, et on suppose
\beq
|X(\F_q)|>5g
\eeq
et
\beq
n\geq2k+g-1.
\eeq
Alors
il existe un diviseur $\F_q$-rationnel $D$ sur $X$,
\`a support dans $X(\F_q)$, tel que 
$D-Q$ soit non-sp\'ecial de degr\'e $g-1$,
et $2D-G$ sans sections.

En particulier, si $n=2k+g-1$, alors
$D-Q$ et $2D-G$ sont non-sp\'eciaux de degr\'e $g-1$.
\end{corollaire}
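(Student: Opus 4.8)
The plan is to apply Proposition~\ref{prop-constr-faible} directly, with $r=2$, $s_1=1$, $T_1=Q$, $s_2=2$ and $T_2=G$. The first thing I would check is that these choices make the two hypotheses match up exactly: here $\sum_{i=1}^2(s_i)^2g=(1+4)g=5g$, so the assumption $|X(\F_q)|>5g$ of the corollary is precisely the assumption $|X(\F_q)|>\sum_i(s_i)^2g$ needed to invoke the proposition. Since Proposition~\ref{prop-constr-faible} imposes no constraint whatsoever on the target degree, I am free to prescribe $d$.

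The key choice is to take $d=g-1+k$, and to let $D$ be the divisor of degree $\deg D=d$, supported on $X(\F_q)$, that the proposition furnishes, for which both $s_1D-T_1=D-Q$ and $s_2D-T_2=2D-G$ are ordinary. It then remains only to translate ``ordinary'' into the two stated conclusions, which is pure degree bookkeeping together with the definition of an ordinary divisor (Definition~\ref{defordinaire}). For $D-Q$ I would note that $\deg(D-Q)=d-k=g-1$, so ordinariness reads $l(D-Q)=\max(0,\deg(D-Q)+1-g)=0$; that is, $D-Q$ is non-special of degree $g-1$. For $2D-G$ I would compute $\deg(2D-G)=2d-n=2g-2+2k-n$, and use the hypothesis $n\geq 2k+g-1$, which gives $2k-n\leq 1-g$ and hence $\deg(2D-G)\leq g-1$; since $2D-G$ is ordinary and of degree at most $g-1$, this forces $l(2D-G)=0$, so $2D-G$ is without sections.

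For the final ``in particular'' clause, I would observe that when $n=2k+g-1$ the same computation yields $\deg(2D-G)=g-1$ exactly, so that $2D-G$ is now ordinary \emph{of degree $g-1$}, i.e. non-special of degree $g-1$, matching the status of $D-Q$.

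I do not expect a genuine obstacle here: the corollary is essentially a specialization of Proposition~\ref{prop-constr-faible}, and all the work has already been absorbed into that proposition (ultimately into the first-order Plücker bound \eqref{miniPlucker2} used to get the clean threshold $5g$). The only point requiring care is the arithmetic of degrees, namely that the single prescription $d=g-1+k$ simultaneously puts $D-Q$ in degree exactly $g-1$ and keeps $\deg(2D-G)\leq g-1$; verifying that these two demands are compatible is exactly the role of the hypothesis $n\geq 2k+g-1$, and once that is checked the rest is immediate.
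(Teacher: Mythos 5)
Your proof is correct and is essentially identical to the paper's: the paper also applies Proposition~\ref{prop-constr-faible} with $r=2$, $s_1=1$, $T_1=Q$, $s_2=2$, $T_2=G$ and $d=k+g-1$, then notes that in degree $\leq g-1$ ordinary means no sections and in degree exactly $g-1$ ordinary means non-special. Your write-up just makes the degree bookkeeping explicit where the paper leaves it as a parenthetical remark.
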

\begin{proof}
On applique la proposition pr\'ec\'edente avec $r=2$,
$s_1=1$, $T_1=Q$, $s_2=2$, $T_2=G$, et $d=k+g-1$
(en remarquant que pour un diviseur de degr\'e inf\'erieur \`a $g-1$,
ordinaire \'equivaut \`a sans sections, et en degr\'e $g-1$, ordinaire
\'equivaut \`a non-sp\'ecial).
\end{proof}

Une variante de ce r\'esultat est \'enonc\'ee par Ballet
(\cite{Ballet2008}, Prop.~2.1)~; malheureusement la preuve qui en est
donn\'ee reproduit l'erreur signal\'ee
par \cite{Cascudo} dans \cite{STV}
(l'auteur veut majorer le nombre de classes de diviseurs $[D]$
telles que $2D-G$ soit sp\'ecial, par le nombre de diviseurs effectifs
de degr\'e $g-1$~; pour cela il associe \`a toute telle classe
un diviseur effectif $E\sim 2D-G$, et conclut en affirmant que cette
application $[D]\mapsto E$ est injective~; cela n'est malheureusement
pas vrai en g\'en\'eral~: si le groupe de classes contient de la
$2$-torsion, on peut trouver deux diviseurs $D$ et $D'$ non lin\'eairement
\'equivalents mais tels que $2D-G$ et $2D'-G$ le soient~;
alors si par exemple
le syst\`eme lin\'eaire correspondant est de dimension~$1$,
il n'y a qu'un seul choix possible pour $E$, et on aura
$[D]\mapsto E$ et $[D']\mapsto E$).

Cette proposition assurait l'existence de diviseurs qui jouent
un r\^ole central dans les r\'esultats de \cite{Ballet2008}~;
ceux-ci \'etaient donc compromis.
En lui substituant notre m\'ethode de construction de diviseurs,
sous la forme du corollaire~\ref{constrD-Qet2D-G},
on va pouvoir corriger cet \'etat de fait
(cela ne fonctionne cependant que si les courbes mises en jeu ont
suffisamment de points).

On commence par donner un crit\`ere num\'erique simple pour estimer
la complexit\'e bilin\'eaire (qui remplace le th\'ero\`eme 2.1.(1)
de \cite{Ballet2008}).

\begin{lemme}
Soit $X$ une courbe de genre $g$ sur un corps fini $\F_q$
avec
\beq
|X(\F_q)|>5g.
\eeq
Alors pour tout entier $k$ dans l'intervalle
\beq
\label{encadrek}
\left\lceil2\log_q\frac{2g+1}{\sqrt{q}-1}\right\rceil\:<\: k\;\leq\:\frac{|X(\F_q)|+1-g}{2}
\eeq
on a
\beq
\mu_q(k)\leq 2k+g-1.
\eeq
\end{lemme}
\begin{proof}
Par \cite{Stichtenoth} Cor.~V.2.10.c,
la premi\`ere in\'egalit\'e dans \eqref{encadrek} implique
que $X$ admet (au moins) un point $Q$
de degr\'e $k$.
D'autre part, la seconde in\'egalit\'e dans \eqref{encadrek} implique
que $X$ contient au moins $n=2k+g-1$ points $P_1,\dots,P_n$ de degr\'e $1$.
Posant $G=P_1+\cdots+P_n$, on conclut alors par
le corollaire~\ref{constrD-Qet2D-G} et la proposition~\ref{criteremu}.
\end{proof}

Par exemple (\cite{Winograd}), en prenant $X=\PP^1$, et compte tenu de
\eqref{Singleton}, on trouve~:
\beq
\mu_q(k)=2k-1\qquad\textrm{pour $k\leq\frac{q}{2}+1$.}
\eeq
En prenant pour $X$ une courbe elliptique convenable, on trouverait aussi
(\cite{Shokro})~:
\beq
\label{inegShokro}
\mu_q(k)\leq 2k\qquad\textrm{pour $k<\frac{q+e(q)+1}{2}$}
\eeq
avec $e(q)\lesssim 2\sqrt{q}$, et en particulier $e(q)=2\sqrt{q}$
si $q$ est un carr\'e.

On pourrait proc\'eder de m\^eme avec des courbes de genre $2$, $3$, etc.

Un autre point de vue, \'equivalent, est le suivant.
Pour tout entier $k$, notons
$\mathcal{X}_{q,k}$ l'ensemble des courbes (\`a isomorphisme pr\`es)
$X$ sur $\F_q$, de genre not\'e $g(X)$, v\'erifiant~:
\begin{enumerate}[a)]
\item $g(X)\leq\frac{1}{2}(q^{(k-1)/2}(q^{1/2}-1)-1)$
\item $|X(\F_q)|>5g(X)$
\item $|X(\F_q)|-g(X)\geq 2k-1$.
\end{enumerate}
Alors~:
\begin{lemme}
\label{muXqk}
Pour tout corps fini $\F_q$,
et pour tout entier $k$ tel que $\mathcal{X}_{q,k}$ soit non vide,
on a
\beq
\frac{1}{k}\mu_q(k)\leq 2+\frac{\min_{X\in\mathcal{X}_{q,k}}g(X)-1}{k}.
\eeq
\end{lemme}
\begin{proof}
C'est une reformulation du lemme pr\'ec\'edent.
\end{proof}

Par rapport \`a d'autres r\'esultats semblables dans la litt\'erature,
notre m\'ethode de construction de diviseurs autorise une condition
de d\'ependance plus faible entre $k$, $g$, et le nombre de points de
la courbe. Par exemple, le th\'eor\`eme~1.1
de \cite{Ballet1999}
(ou plus pr\'ecis\'ement son corollaire~2.1)
arrive \`a la m\^eme
conclusion, mais en rempla\c{c}ant la seconde in\'egalit\'e
de \eqref{encadrek} par $k\leq\frac{|X(\F_q)|+1-2g}{2}$, ou ce qui
revient au m\^eme, en rempla\c{c}ant la condition c) d\'efinissant
$\mathcal{X}_{q,k}$ par la condition plus forte $|X(\F_q)|-2g(X)\geq 2k-1$.
En contrepartie cependant, notre m\'ethode exige que les courbes
consid\'er\'ees aient
suffisamment de points, comme exprim\'e par la condition b) ci-dessus.


\medskip

On peut maintenant continuer en suivant les arguments \cite{Ballet2008}.
En fait, on va donner une
version l\'eg\`erement plus pr\'ecise du r\'esultat qui y \'etait
\'enonc\'e.

On consid\`ere la fonction psi de Dedekind, d\'efinie pour tout entier $N$
par
\beq
\psi(N)=N\prod_{\substack{l|N\\ \textrm{$l$ premier}}}\left(1+\frac{1}{l}\right).
\eeq

\begin{lemme}
\label{courbesmodulaires}
Soient $p$ un nombre premier et $N$ un entier premier \`a $p$.
Alors la courbe modulaire
$X_0(N)$ est lisse sur $\F_p$, de genre
\beq
\label{g0N<psi}
g_0(N)\leq\frac{\psi(N)}{12},
\eeq
et poss\`ede
\beq
\label{|X0N|>psi}
|X_0(N)(\F_{p^2})|\geq(p-1)\frac{\psi(N)}{12}
\eeq
points sur $\F_{p^2}$.
\end{lemme}
\begin{proof}
Voir \cite{TV}, \S~4.1.
\end{proof}

\begin{remarque}
En fait on peut \^etre l\'eg\`erement plus pr\'ecis dans le lemme.
La formule de Hurwitz donne une expression exacte
\beq
g_0(N)=\frac{\psi(N)}{12}+1-\frac{\nu_\infty(N)}{2}-\frac{\nu_3(N)}{3}-\frac{\nu_2(N)}{4}
\eeq
avec
\begin{itemize}
\item $\;\nu_\infty(N)=\sum_{d|N}\phi(\pgcd(d,\frac{N}{d}))=\displaystyle\prod_{l^\nu||N}\textstyle\begin{cases}2l^{\frac{\nu-1}{2}} & \textrm{si }\nu\textrm{ impair}\\ (l+1) l^{\frac{\nu}{2}-1} & \textrm{si }\nu\textrm{ pair}\end{cases}$
\item $\;\nu_3(N)=\begin{cases}\prod_{l|N}\left(1+\left(\frac{-3}{l}\right)\right) & \textrm{si }9\nmid N \\ 0 & \textrm{si }9\,|\,N\end{cases}$
\item $\;\nu_2(N)=\begin{cases}\prod_{l|N}\left(1+\left(\frac{-1}{l}\right)\right) & \textrm{si }4\nmid N \\ 0 & \textrm{si }4\,|\,N\end{cases}$
\end{itemize}
tandis que la relation d'Eichler-Shimura donne
\beq
|X_0(N)(\F_{p^2})|=p^2+1+pg_0(N)-\tr T_{p^2}
\eeq
o\`u l'op\'erateur de Hecke $T_{p^2}$ agit sur l'espace des formes
paraboliques $S_2(\Gamma_0(N))$, sa trace pouvant se calculer
explicitement, par exemple par la formule donn\'ee dans \cite{Miyake},
Th.~6.8.4 et Rem.~6.8.1, pp.~263--264~:
\beq
\tr T_{p^2}=\frac{\psi(N)}{12}+\delta(N,p^2)-\sum_ta(t)\sum_fb(t,f)c(t,f)
\eeq
o\`u $\delta(N,p^2)=p^2+p+1$ si $N>1$.
Les termes $a(t)\sum_fb(t,f)c(t,f)$ sont positifs, et leur contribution
\`a la somme peut s'exprimer simplement pour certaines valeurs
particuli\`eres de $t$~:
\begin{itemize}
\item $\;\frac{1}{2}\,p\,\nu_\infty(N)$ pour $t=\pm 2p$
\item $\;\frac{1}{3}\left(p+1-\left(\frac{-3}{p}\right)\right)\nu_3(N)$ pour $t=\pm p\;$ (si $3\nmid N$)
\item $\;\frac{1}{4}\left(p+1-\left(\frac{-1}{p}\right)\right)\nu_2(N)$ pour $t=0\;$ (si $2\nmid N$)
\end{itemize}
de sorte que pour $N>1$ premier \`a $6p$~:
\beq
\begin{split}
|X_0(N)(\F_{p^2})|=(p-1)\frac{\psi(N)}{12}\,+\,\frac{1\!-\!\left(\frac{-3}{p}\right)}{3}&\nu_3(N)+\frac{1\!-\!\left(\frac{-1}{p}\right)}{4}\nu_2(N)\\
&+\!\sum_{t\neq0,\pm p,\pm 2p}^{\phantom{.}}\!\!\!a(t)\sum_fb(t,f)c(t,f).
\end{split}
\eeq
On obtiendrait des formules analogues pour $2|N$ ou $3|N$.
\end{remarque}

Pour toute partie infinie $\mathcal{A}$ de $\N$ et pour tout r\'eel $x>0$
on note
\beq
\lceil x\rceil_{\mathcal{A}}=\min\:\mathcal{A}\cap[x,+\infty[
\eeq
le plus petit \'el\'ement de $\mathcal{A}$ sup\'erieur ou \'egal \`a $x$,
et on pose
\beq
\epsilon_{\mathcal{A}}(x)=\sup_{y\geq x}\:\frac{\lceil y\rceil_{\mathcal{A}}-y}{y},
\eeq
de sorte que la fonction $\epsilon_{\mathcal{A}}$ est d\'ecroissante,
et que pour tout $x>0$,
l'intervalle $[x,(1+\epsilon_{\mathcal{A}}(x))x]$ contient au moins
un \'el\'ement de $\mathcal{A}$.


Ainsi, si $p$ est un nombre premier,
$\lceil x\rceil_{\psi(\N\setminus p\N)}$ est le plus petit $n\geq x$
qui puisse s'\'ecrire sous la forme $n=\psi(N)$
pour un entier $N$ premier \`a $p$, et ~:

\begin{lemme}
\label{psiBertrand}
Avec ces notations, pour $p\neq2$, on a
\beq
\lceil x\rceil_{\psi(\N\setminus p\N)}\leq 2x\quad\textrm{pour tout $x\geq\frac{3}{2}$,}
\eeq
ou autrement dit~:
\beq
\epsilon_{\psi(\N\setminus p\N)}(3/2)\leq 1.
\eeq
\end{lemme}
\smallskip
\begin{proof}
Si
$j=\lfloor\frac{\log 2x/3}{\log 2}\rfloor$,
on a bien $x<3\cdot2^j=\psi(2^{j+1})\leq2x$.
\end{proof}

\begin{proposition}
\label{Ballet+}
Soit $p\geq 7$ un nombre premier.
Alors pour tout $k>\frac{p^2+p+1}{2}$ on a
\beq
\frac{1}{k}\mu_{p^2}(k)\leq 2+\frac{\frac{1}{12}\left\lceil\frac{24k-12}{p-2}\right\rceil_{\psi(\N\setminus p\N)}-1}{k}.
\eeq
\end{proposition}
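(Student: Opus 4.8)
The plan is to produce, for each $k$ in the stated range, a single modular curve over $\F_{p^2}$ lying in the set $\mathcal{X}_{p^2,k}$ of Lemma~\ref{muXqk}, whose genus is at most $\frac{1}{12}\lceil\frac{24k-12}{p-2}\rceil_{\psi(\N\setminus p\N)}$; the asserted inequality then drops out of that lemma.

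First I would set $M=\lceil\frac{24k-12}{p-2}\rceil_{\psi(\N\setminus p\N)}$, so that $M\geq\frac{24k-12}{p-2}$ and $M=\psi(N)$ for some integer $N$ prime to $p$. I take $X=X_0(N)$, regarded over $\F_{p^2}$ by base change from $\F_p$ (which preserves smoothness, geometric irreducibility and the genus). Lemma~\ref{courbesmodulaires} then gives $g_0(N)\leq\frac{M}{12}$ and $|X_0(N)(\F_{p^2})|\geq(p-1)\frac{M}{12}$.

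Next I would verify the three conditions defining $\mathcal{X}_{p^2,k}$. Subtracting the two estimates, $|X_0(N)(\F_{p^2})|-g_0(N)\geq(p-2)\frac{M}{12}\geq(p-2)\cdot\frac{1}{12}\cdot\frac{24k-12}{p-2}=2k-1$, which is condition~c). Since $p-1\geq6$, also $|X_0(N)(\F_{p^2})|\geq(p-1)\frac{M}{12}\geq\frac{M}{2}>\frac{5M}{12}\geq5\,g_0(N)$, which is condition~b). For condition~a), namely $g_0(N)\leq\frac{1}{2}(p^{k-1}(p-1)-1)$, I would invoke Lemma~\ref{psiBertrand}: the hypothesis $k>\frac{p^2+p+1}{2}$ makes $\frac{24k-12}{p-2}\geq\frac{3}{2}$, so $M\leq2\cdot\frac{24k-12}{p-2}$ and hence $g_0(N)\leq\frac{4k-2}{p-2}$; since this grows only linearly in $k$ whereas the right-hand side grows like $p^{k-1}$, the inequality holds with an enormous margin throughout the range.

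Having shown $X_0(N)\in\mathcal{X}_{p^2,k}$, Lemma~\ref{muXqk} yields
\[
\frac{1}{k}\mu_{p^2}(k)\leq 2+\frac{\min_{X\in\mathcal{X}_{p^2,k}}g(X)-1}{k}\leq 2+\frac{g_0(N)-1}{k}\leq 2+\frac{\frac{1}{12}M-1}{k},
\]
which is exactly the claimed bound. The one point requiring genuine care is condition~a): a priori the ceiling $\lceil\cdot\rceil_{\psi(\N\setminus p\N)}$ could overshoot $\frac{24k-12}{p-2}$ badly if the values $\psi(N)$ with $p\nmid N$ left large gaps, which would inflate $g_0(N)$ and could even destroy the existence of a closed point of degree $k$; Lemma~\ref{psiBertrand} is precisely the Bertrand-type input that bounds this overshoot by a factor $2$ and thereby keeps the genus small enough. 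The threshold $k>\frac{p^2+p+1}{2}$ serves to place us in the regime where this modular-curve construction is the operative one (for smaller $k$ one uses $\PP^1$ or curves of low genus) and comfortably secures all three conditions above.
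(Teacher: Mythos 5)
Your proposal is correct and follows essentially the same route as the paper: choose $N$ prime to $p$ with $\psi(N)=\left\lceil\frac{24k-12}{p-2}\right\rceil_{\psi(\N\setminus p\N)}$, take $X=X_0(N)$, verify conditions a), b), c) of $\mathcal{X}_{p^2,k}$ via Lemme~\ref{courbesmodulaires} and Lemme~\ref{psiBertrand}, and conclude with Lemme~\ref{muXqk}. The only cosmetic difference is in the check of condition b), where you compare $(p-1)\frac{M}{12}\geq\frac{M}{2}>\frac{5M}{12}$ while the paper writes $|X(\F_{p^2})|-5g(X)\geq(p-6)\frac{\psi(N)}{12}>0$; both are the same estimate.
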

\begin{proof}
On choisit $N$ premier \`a $p$ tel que
$\psi(N)=\left\lceil\frac{24k-12}{p-2}\right\rceil_{\psi(\N\setminus p\N)}$
et on pose $X=X_0(N)$.
Alors par \eqref{g0N<psi} et
\eqref{|X0N|>psi} on a $|X(\F_{p^2})|-g(X)\geq(p-2)\frac{\psi(N)}{12}$
donc la condition c) pr\'ec\'edant le lemme \ref{muXqk}
est bien v\'erifi\'ee.

De m\^eme $|X(\F_{p^2})|-5g(X)\geq(p-6)\frac{\psi(N)}{12}$
donc pour $p\geq7$ la condition b) est bien v\'erifi\'ee elle aussi.


Enfin par le lemme~\ref{psiBertrand} on a
$\psi(N)=\left\lceil\frac{24k-12}{p-2}\right\rceil_{\psi(\N\setminus p\N)}\leq\frac{48k-24}{p-2}$
donc
\beq
g(X)\leq\frac{\psi(N)}{12}\leq\frac{4k-2}{p-2},
\eeq
et pour $p\geq7$ et $k>\frac{p^2+p+1}{2}$, on montre
facilement que cette derni\`ere quantit\'e
est major\'ee par $\frac{1}{2}(p^{k-1}(p-1)-1)$.
Ainsi la condition a) est v\'erifi\'ee,
et on peut conclure gr\^ace au lemme \ref{muXqk}.
\end{proof}

\begin{remarque}
Gr\^ace \`a cette proposition, toute majoration (effective)
de la fonction $\lceil .\rceil_{\psi(\N\setminus p\N)}$,
ou de $\epsilon_{\psi(\N\setminus p\N)}$,
se traduit en une majoration (effective)
des $\mu_{p^2}(k)$.
Il s'agit donc, pour un r\'eel $x>0$ donn\'e, de trouver un entier $N$
premier \`a $p$ tel que $\psi(N)$ soit sup\'erieur \`a $x$ mais aussi
petit que possible. Une premi\`ere analyse fournit deux approches
naturelles \`a ce probl\`eme.

Tout d'abord, on peut chercher $N$ parmi les entiers n'ayant
que des petits facteurs premiers. En effet, soit
$\mathcal{B}=\{l_1,\dots,l_B\}$
un ensemble de nombres premiers,
$p\not\in\mathcal{B}$. Posons $N_{\mathcal{B}}=\prod_{i=1}^Bl_i$
et supposons $\psi(N_{\mathcal{B}})=\prod_{i=1}^B(l_i+1)<x$.
Alors si $N=N'N_{\mathcal{B}}$ o\`u $N'$ a tous ses facteurs premiers
dans $\mathcal{B}$, on a $\psi(N)=N'\psi(N_{\mathcal{B}})$.
Ainsi, si l'on sait trouver un entier $N'\geq\frac{x}{\psi(N_{\mathcal{B}})}$
aussi petit que possible
ayant tous ses facteurs premiers dans $\mathcal{B}$,
on en d\'eduit une majoration de $\lceil x\rceil_{\psi(\N\setminus p\N)}$.
Pour $\mathcal{B}=\{2\}$ on retrouve le lemme~\ref{psiBertrand}.
Il serait int\'eressant d'optimiser le choix de $\mathcal{B}$
(d\'ependant \'eventuellement de $x$) pour obtenir de meilleures
estimations.

\`A l'oppos\'e, on peut aussi chercher $N$ parmi les entiers n'ayant
que des grands facteurs premiers. En effet, si $N$
n'a aucun facteur premier inf\'erieur \`a $N^{1/u}$,
alors $\psi(N)\leq N\left(1+\frac{1}{N^{1/u}}\right)^u$,
et si on sait trouver un tel $N\geq x$ aussi petit que possible,
on peut esp\'erer, en optimisant pr\'ealablement $u$, obtenir une majoration
approchant suffisamment $\lceil x\rceil_{\psi(\N\setminus p\N)}$.
Le cas extr\^eme est celui o\`u l'on prend $u=1$, c'est-\`a-dire
o\`u l'on cherche $N$ premier.
On obtient alors la majoration
\beq
\label{majpremier}
\lceil x\rceil_{\psi(\N\setminus p\N)}\leq\lceil x-1\rceil_{\mathcal{P}}+1\quad\textrm{pour $x>p+1$}
\eeq
o\`u $\mathcal{P}$ est l'ensemble des nombres premiers
(en effet, $N=\lceil x-1\rceil_{\mathcal{P}}$ est un nombre premier strictement
plus grand que $p$, et $\psi(N)=N+1\geq x$).
Ceci permet de mettre \`a profit tous les r\'esultats
connus sur la fonction $\epsilon_{\mathcal{P}}$~;
par exemple le postulat de Bertrand, prouv\'e par Tchebychev,
donne $\epsilon_{\mathcal{P}}(1)=1$, et
combin\'e avec \eqref{majpremier}, il fournit une majoration
essentiellement \'equivalente \`a celle du lemme~\ref{psiBertrand}. 
D'autres estimations plus fines sont connues, et on les utilisera
dans le corollaire ci-dessous.
Il serait int\'eressant cependant, l\`a aussi, d'\'etudier
si un choix convenable de $u>2$, d\'ependant \'eventuellement de $x$,
permet de faire signicativement mieux.
\end{remarque}

\begin{corollaire}
Soit $p\geq 7$ un nombre premier.
Alors
\begin{enumerate}[(i)]
\item pour tout $k>\frac{p^2+p+1}{2}$,
\beq
\frac{1}{k}\mu_{p^2}(k)\leq 2\left(1+\frac{1+\epsilon_{\mathcal{P}}\!\left(\frac{24k}{p-2}\right)}{p-2}\right)
\eeq
\item pour tout $k\geq1$,
\beq
\frac{1}{k}\mu_{p^2}(k)\leq 2\left(1+\frac{2}{p-2}\right)
\eeq
\item pour tout $k\geq1$,
\beq
\frac{1}{k}\mu_{p^2}(k)\leq 2\left(1+\frac{1+\frac{10}{139}}{p-2}\right)
\eeq
\item pour tout $k\geq e^{50}p$,
\beq
\frac{1}{k}\mu_{p^2}(k)\leq 2\left(1+\frac{1,000\,000\,005}{p-2}\right)
\eeq
\item pour tout $k\geq 16\,531\,(p-2)$,
\beq
\frac{1}{k}\mu_{p^2}(k)\leq 2\left(1+\frac{1+\frac{1}{25\log^2\frac{24k}{p-2}}}{p-2}\right)
\eeq
\item pour tout $k$ assez grand,
\beq
\frac{1}{k}\mu_{p^2}(k)\leq 2\left(1+\frac{1+\frac{1}{k^{0,475}}}{p-2}\right).
\eeq
\end{enumerate}
\end{corollaire}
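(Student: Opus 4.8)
The engine for all six items is Proposition~\ref{Ballet+}, which bounds $\frac{1}{k}\mu_{p^2}(k)$ in terms of $\lceil\frac{24k-12}{p-2}\rceil_{\psi(\N\setminus p\N)}$, together with the comparison \eqref{majpremier} between the set $\psi(\N\setminus p\N)$ and the set $\mathcal{P}$ of primes. The plan is to first derive (i) as a clean reformulation of Proposition~\ref{Ballet+} in terms of the gap function $\epsilon_{\mathcal{P}}$, and then obtain (ii)--(vi) simply by feeding into (i) successively sharper estimates on prime gaps, supplementing these by an elementary bound in the small-$k$ range where Proposition~\ref{Ballet+} does not apply.

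For (i), the chain I would run is as follows. Starting from Proposition~\ref{Ballet+}, apply \eqref{majpremier} with $x=\frac{24k-12}{p-2}$ (legitimate because $k>\frac{p^2+p+1}{2}$ forces $x>p+1$) to get $\lceil\frac{24k-12}{p-2}\rceil_{\psi(\N\setminus p\N)}\leq\lceil\frac{24k-12}{p-2}-1\rceil_{\mathcal{P}}+1$. Since $\frac{24k-12}{p-2}-1\leq\frac{24k}{p-2}$ and $\lceil\cdot\rceil_{\mathcal{P}}$ is non-decreasing, I replace the argument by $\frac{24k}{p-2}$, and then invoke the very definition of $\epsilon_{\mathcal{P}}$ to write $\lceil\frac{24k}{p-2}\rceil_{\mathcal{P}}\leq\frac{24k}{p-2}\bigl(1+\epsilon_{\mathcal{P}}(\frac{24k}{p-2})\bigr)$. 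Substituting back into Proposition~\ref{Ballet+}, the factor $\frac{1}{12}$ and the spurious $+1$ collapse into a harmless term $-\frac{11}{12k}\leq0$, which I discard, leaving exactly the inequality of (i).

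Each of (ii)--(vi) is then item (i) evaluated with a specific majoration of $\epsilon_{\mathcal{P}}(\frac{24k}{p-2})$: Bertrand's postulate $\epsilon_{\mathcal{P}}\leq1$ for (ii); a finite verification that the largest relative prime gap past the relevant threshold (the jump from $139$ to $149$) yields $\epsilon_{\mathcal{P}}\leq\frac{10}{139}$ for (iii); Dusart's estimate $\epsilon_{\mathcal{P}}(x)\leq\frac{1}{25\log^2 x}$ valid for $x\geq396738$ for (v), where the hypothesis $k\geq16531(p-2)$ is precisely what guarantees $\frac{24k}{p-2}\geq24\cdot16531\geq396738$; and the Baker--Harman--Pintz bound $p_{n+1}-p_n\ll p_n^{0.525}$, hence $\epsilon_{\mathcal{P}}(x)\ll x^{-0.475}$, for (iv) (applied at $x\geq24e^{50}$, where $x^{-0.475}$ comfortably beats $5\cdot10^{-9}$) and for (vi) (as $k\to\infty$). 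In items (ii) and (iii), which assert a bound for \emph{every} $k\geq1$, I must additionally treat $k\leq\frac{p^2+p+1}{2}$, outside the range of Proposition~\ref{Ballet+}; there the Shokrollahi elliptic bound \eqref{inegShokro} applies, since $q=p^2$ gives $\frac{q+e(q)+1}{2}=\frac{(p+1)^2}{2}>\frac{p^2+p+1}{2}$, whence $\mu_{p^2}(k)\leq2k$ and $\frac{1}{k}\mu_{p^2}(k)\leq2$, which lies below every right-hand side.

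The main obstacle is bookkeeping rather than conceptual: in each of (ii)--(vi) I must check that the stated range of $k$ places us simultaneously in the domain $k>\frac{p^2+p+1}{2}$ of Proposition~\ref{Ballet+} (or, failing that, in the elliptic range) and in the domain of validity of the chosen prime-gap estimate, and that the estimate applied at the argument $\frac{24k}{p-2}$ really produces the stated constant. The two most delicate points will be matching the threshold $16531(p-2)$ with Dusart's $396738$ in (v), and, in (vi), controlling the change-of-variable discrepancy: the passage from $\epsilon_{\mathcal{P}}(\frac{24k}{p-2})$ to a clean $k^{-0.475}$ introduces a factor $(\frac{p-2}{24})^{0.475}$, which is $\geq1$ for large $p$ and is absorbed into ``$k$ assez grand'' only by exploiting that Baker--Harman--Pintz in fact furnishes a gap exponent strictly below $0.525$, so that the surviving power of $k$ eventually dominates the constant.
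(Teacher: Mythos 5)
Your overall architecture is the same as the paper's: point (i) is derived exactly as in the paper (Proposition~\ref{Ballet+}, the comparison \eqref{majpremier}, and monotonicity of $\lceil\cdot\rceil_{\mathcal{P}}$, with the $-\frac{11}{12k}$ slack discarded), point (v) is the intended application of Dusart with the correct threshold check $24\cdot 16\,531=396\,744\geq 396\,738$, and your use of \eqref{inegShokro} to cover $k\leq\frac{p^2+p+1}{2}$ in (ii) and (iii) matches the paper. For (ii) the paper actually goes through Lemme~\ref{psiBertrand} (powers of $2$, $\psi(2^{j+1})=3\cdot 2^j$) combined directly with Proposition~\ref{Ballet+}, rather than through (i) and Bertrand's postulate for primes; both routes give the same constant, so this difference is immaterial.

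There are, however, two genuine gaps. The serious one is (iv): you derive it from Baker--Harman--Pintz, but \cite{BHP} is an asymptotic theorem (``for all sufficiently large $x$'') with no explicit threshold and no explicit constant, so it cannot yield an effective statement valid for all $k\geq e^{50}p$ with the explicit constant $1+5\cdot10^{-9}$. That $x^{-0.475}$ ``comfortably beats'' $5\cdot10^{-9}$ at $x=24e^{50}$ is irrelevant as long as you cannot certify that $24e^{50}$ exceeds the (unspecified) threshold in \cite{BHP}, nor control the implied constant. The paper instead invokes Ramar\'e--Saouter \cite{RS}, whose \emph{effective} short-interval theorem (a prime in every interval $]x(1-\Delta^{-1}),x]$ with $\Delta$ of order $2\cdot10^{8}$, valid from $x\geq e^{50}$ on) is precisely the source of both the threshold $e^{50}$ and the constant in the statement. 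The second, smaller, gap is in (iii): $\epsilon_{\mathcal{P}}(139)$ is a supremum over the infinite range $y\geq 139$, so no finite verification of prime gaps can establish $\epsilon_{\mathcal{P}}(139)=10/139$ by itself; the paper combines the explicit computation of gaps up to $2\,010\,881$ (resp.\ $396\,833$) with the effective estimates of \cite{Schoenfeld} (resp.\ \cite{Dusart}) to bound the tail of the supremum, and your argument needs this analytic ingredient as well. On (vi), by contrast, you are more careful than the paper: the rescaling loss $\left(\frac{p-2}{24}\right)^{0.475}$ is real, the paper's one-line citation of \cite{BHP} silently ignores it, and some strengthening beyond the bare exponent $0.525$ is indeed required to absorb it, exactly as you point out.
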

\begin{proof}
Le point (i) d\'ecoule de la proposition~\ref{Ballet+},
de \eqref{majpremier},
et de l'in\'egalit\'e \'evidente
$\left\lceil\frac{24k-12}{p-2}-1\right\rceil_{\mathcal{P}}\leq\left\lceil\frac{24k}{p-2}\right\rceil_{\mathcal{P}}$.

Le point (ii) d\'ecoule de \eqref{inegShokro},
de la proposition~\ref{Ballet+}, et du lemme~\ref{psiBertrand}.

En remarquant que pour $p\geq 7$ et
$k>\frac{p^2+p+1}{2}$ on a $\frac{24k}{p-2}>139$,
le point (iii) d\'ecoule de \eqref{inegShokro}, de (i) et de
l'\'egalit\'e $\epsilon_{\mathcal{P}}(139)=10/139$.
Pour montrer cette derni\`ere, on remarque que si
$p_1=2$, $p_2=3$, $p_3=5$, $p_4=7$, $p_5=11$, \dots {}
est la suite des nombres premiers,
alors pour tous $n\leq n'$ on a
\beq
\epsilon_{\mathcal{P}}(p_n)=\max\left(\epsilon_{\mathcal{P}}(p_{n'}),\,\max_{n\leq j<n'}\frac{p_{j+1}-p_j}{p_j}\right).
\eeq
On pose $p_n=139$, on majore $\epsilon_{\mathcal{P}}(p_{n'})$ pour
$p_{n'}=2\,010\,881$ au moyen de \cite{Schoenfeld}
(ou bien pour $p_{n'}=396\,833$ au moyen de \cite{Dusart})
et on conclut en calculant explicitement les derniers termes
pour $n\leq j<n'$.

Enfin les points (iv), (v) et (vi) se d\'eduisent de (i)
et des estimations de 
\cite{RS}, \cite{Dusart}, et \cite{BHP}, respectivement. 
\end{proof}

En utilisant d'autres familles de courbes
(par exemple celles utilis\'ees dans \cite{STV}),
on pourrait obtenir des r\'esultats semblables pour $q=p^{2m}$
avec $m$ quelconque.
Ce cas est \'etudi\'e aussi dans \cite{Ballet2008}, cependant,
signalons encore une petite impr\'ecision dans la preuve~:
l'auteur applique le postulat de Bertrand sans prendre garde qu'il
n'a pas affaire \`a l'ensemble de tous les nombres premiers,
mais seulement \`a ceux qui
se scindent compl\`etement dans une certaine extension ab\'elienne $L$
de $\Q$. Sur le principe, cette strat\'egie de preuve reste correcte,
mais il faut pour cela
substituer au postulat de Bertrand une estimation sur la taille des
intervalles contenant des nombres premiers dans une suite arithm\'etique
donn\'ee.

\medskip

On peut aussi obtenir des estimations pour $q$ quelconque
(non n\'ecessairement carr\'e), cependant l'utilisation de la
m\'ethode de Chudnovsky-Chudnovsky telle que pr\'esent\'ee dans
le th\'eor\`eme~\ref{thChCh}, o\`u l'on \'evalue en des points
simples de degr\'e $1$, semble ne pas conduire aux meilleurs
r\'esultats.
Diverses variantes de la m\'ethode rel\^achant cette derni\`ere
condition ont \'et\'e introduites (\cite{BR0}\cite{Arnaud}), 
la plus g\'en\'erale \'etant celle de \cite{CO}, qui autorise
un diviseur d'\'evaluation $G$ quelconque.

On va voir ci-dessous comment notre m\'ethode de construction
de diviseurs permet de pr\'eciser certains r\'esultats de \cite{CO}.
On reprend les notations qui y sont introduites, et
en particulier, pour tout corps fini $\F_q$,
on note $\widehat{M}_q(u)$ la complexit\'e bilin\'eaire
(sur $\F_q$)
de la multiplication dans l'anneau local $\F_q[[t]]/t^u$.

\begin{proposition}
\label{CO+}
Soient $X$ une courbe de genre $g$ sur
un corps fini $\F_q$, et $k>1$ un entier naturel.
Soit aussi
\beq
G=u_1P_1+\cdots+u_NP_N
\eeq
un diviseur effectif sur $X$, o\`u les $P_i$ sont des points ferm\'es
de degr\'es arbitraires.
On suppose que $X$ admet un point ferm\'e $Q$ de degr\'e $k$
(c'est le cas par exemple si $2g+1\leq q^{(k-1)/2}(q^{1/2}-1)$),
et que
\beq
|X(\F_q)|>5g
\eeq
et
\beq
\deg G\geq2k+g-1.
\eeq
Alors
\beq
\mu_q(k)\leq\sum_{i=1}^N\mu_q(\deg(P_i))\widehat{M}_{q^{\deg(P_i)}}(u_i).
\eeq
\end{proposition}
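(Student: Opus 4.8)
Le plan est de ramener l'\'enonc\'e \`a la version g\'en\'eralis\'ee de la m\'ethode de Chudnovsky-Chudnovsky de \cite{CO}, en fournissant le diviseur d'interpolation $D$ gr\^ace \`a notre construction (corollaire~\ref{constrD-Qet2D-G}). D'abord, j'observerais que les hypoth\`eses sont pr\'ecis\'ement celles de ce corollaire, appliqu\'e au diviseur premier $\F_q$-rationnel $Q$ (de degr\'e $k$) et au diviseur effectif $\F_q$-rationnel $G$ (de degr\'e $\deg G\geq 2k+g-1$)~: on dispose de $|X(\F_q)|>5g$, et le point $Q$ existe par hypoth\`ese (la condition $2g+1\leq q^{(k-1)/2}(q^{1/2}-1)$ assurant cette existence). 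On obtient ainsi un diviseur $\F_q$-rationnel $D$, \`a support dans $X(\F_q)$, tel que $D-Q$ soit non-sp\'ecial de degr\'e $g-1$ (donc $\deg D=k+g-1$) et que $2D-G$ soit sans sections. Comme $k>1$, le point $Q$ n'est pas rationnel, si bien que son support ne rencontre pas celui de $D$.

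Ensuite, je v\'erifierais que $(X,G,D,Q)$ satisfait aux deux conditions requises par la m\'ethode. D'une part, $\ev_{Q,D}:\cL(D)\longto L$ est surjective~: comme dans la preuve de la proposition~\ref{criteremu}, la non-sp\'ecialit\'e de $D-Q$ donne $l(D)=\deg D+1-g=k$ et $l(D-Q)=0$, d'o\`u $\dim\im\ev_{Q,D}=l(D)-l(D-Q)=k=\deg Q$. D'autre part, l'\'evaluation g\'en\'eralis\'ee
\beq
\ev_{G,2D}:\cL(2D)\longto\prod_{i=1}^N\cO_{P_i}/\mathfrak{m}_{P_i}^{u_i}
\eeq
est injective, car son noyau est $\cL(2D-G)$, qui est nul puisque $2D-G$ est sans sections. (Le twist par les uniformisantes locales de la d\'efinition~\ref{defevgen} assure que ces \'evaluations sont bien d\'efinies m\^eme si les supports se rencontrent, et que le noyau se calcule ainsi.)

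Pour conclure, j'appliquerais l'algorithme de \cite{CO}. Chaque facteur local $\cO_{P_i}/\mathfrak{m}_{P_i}^{u_i}$ est isomorphe \`a $\F_{q^{d_i}}[[t]]/t^{u_i}$, o\`u $d_i=\deg P_i$. Choisissant une section $\sigma$ de $\ev_{Q,D}$, on rel\`eve $x,y\in L$ en $f=\sigma(x)$ et $g=\sigma(y)$ dans $\cL(D)$~; alors $fg\in\cL(2D)$ et, par la g\'en\'eralisation aux \'evaluations avec multiplicit\'es de la commutativit\'e du diagramme~\eqref{morphisme-algebre}, on a $xy=\ev_{Q,2D}(fg)$, quantit\'e qui se reconstruit lin\'eairement \`a partir de $\ev_{G,2D}(fg)$ par injectivit\'e de $\ev_{G,2D}$. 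La seule partie bilin\'eaire est donc le calcul de $\ev_{G,2D}(fg)=\ev_{G,D}(f)\cdot\ev_{G,D}(g)$, soit les produits composante par composante dans les anneaux $\F_{q^{d_i}}[[t]]/t^{u_i}$. Par multiplicativit\'e du rang tensoriel sous composition, la complexit\'e bilin\'eaire sur $\F_q$ de la multiplication dans $\F_{q^{d_i}}[[t]]/t^{u_i}$ est major\'ee par $\mu_q(d_i)\,\widehat{M}_{q^{d_i}}(u_i)$ (chaque produit $\F_{q^{d_i}}$-bilin\'eaire, de co\^ut $\widehat{M}_{q^{d_i}}(u_i)$, faisant intervenir des multiplications dans $\F_{q^{d_i}}$ co\^utant chacune $\mu_q(d_i)$ sur $\F_q$). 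En sommant sur $i$, j'obtiendrais l'in\'egalit\'e annonc\'ee.

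Le c{\oe}ur de la preuve, et sa seule v\'eritable difficult\'e, r\'eside dans la construction de $D$, d\'ej\`a encapsul\'ee dans le corollaire~\ref{constrD-Qet2D-G} (lequel repose sur le lemme~\ref{lemme2} de type Pl\"ucker)~; c'est ce qui permet de contourner l'argument de cardinalit\'e erron\'e de \cite{STV}\cite{Ballet2008}. Le reste est une adaptation routini\`ere de la m\'ethode de Chudnovsky-Chudnovsky, la seule vigilance portant sur la bonne d\'efinition des \'evaluations g\'en\'eralis\'ees et sur le passage \`a la complexit\'e bilin\'eaire via la tour $\F_q\subset\F_{q^{d_i}}$.
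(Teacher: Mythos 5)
Votre preuve est correcte et suit pour l'essentiel la m\^eme d\'emarche que l'article~: le c{\oe}ur est identique (construction de $D$ par le corollaire~\ref{constrD-Qet2D-G}, surjectivit\'e de $\ev_{Q,D}$ par non-sp\'ecialit\'e de $D-Q$, injectivit\'e de $\ev_{G,2D}$ car $l(2D-G)=0$, puis conclusion par la m\'ethode de \cite{CO}). La seule divergence est mineure~: l\`a o\`u l'article rend le support de $D$ disjoint de $Q$ et de $G$ par le th\'eor\`eme d'approximation forte afin de citer tel quel le th\'eor\`eme~3.1 de \cite{CO}, vous retenez l'alternative que l'article mentionne entre parenth\`eses --- les \'evaluations g\'en\'eralis\'ees tordues par des uniformisantes, \'etendues aux points de degr\'e sup\'erieur avec multiplicit\'es --- et vous red\'emontrez l'algorithme en ligne, ce qui revient au m\^eme.
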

\begin{proof}
Le corollaire~\ref{constrD-Qet2D-G} assure l'existence d'un diviseur
$D$ tel que $D-Q$ soit non-sp\'ecial de degr\'e $g-1$,
et $2D-G$ sans sections.
Quitte \`a remplacer $D$ par un diviseur lin\'eairement \'equivalent
on peut, par le th\'eor\`eme d'approximation forte, supposer que $D$
est de support disjoint de $Q$ et $G$ (on pourrait aussi modifier
les \'enonc\'es de \cite{CO} par l'introduction d'applications
d'\'evaluation g\'en\'eralis\'ees comme dans notre
d\'efinition~\ref{defevgen}).
En reprenant les notations de \cite{CO},
la condition que $D-Q$ est non-sp\'ecial implique que
l'application d'\'evaluation $\cL(D)\longto\cO_Q/Q$ est surjective,
tandis que celle que $2D-G$ est sans sections implique que
\beq
\phi:\cL(2D)\longto \cO_{P_1}/P_1^{\,u_1}\times\cdots\times\cO_{P_N}/P_N^{\:u_N}
\eeq
est injective, ce qui permet d'appliquer
le th\'eor\`eme~3.1 de \cite{CO} et de conclure.
\end{proof}

On remarquera que notre proposition am\'eliore le th\'eor\`eme~3.2
de \cite{CO}, qui demande $\deg G\geq 2k+2g-1$.
De fa\c{c}on \'equivalente, notre proposition donne un crit\`ere num\'erique
simple pour assurer la validit\'e des hypoth\`eses du th\'eor\`eme~3.6
de \cite{CO}, et plus particuli\`erement l'injectivit\'e de
l'application $\phi$ qui y est demand\'ee
(en notant par ailleurs une impr\'ecision dans
l'\'enonc\'e de ce th\'eor\`eme~3.6~:
il ne suffit pas de demander que $D$ soit non-sp\'ecial,
c'est $D-Q$ qui doit l'\^etre).

Cependant, comme pr\'ec\'edemment, notre m\'ethode exige
en contrepartie que la courbe
ait suffisamment de points de degr\'e $1$, par la condition
$|X(\F_q)|>5g$. On pourrait rel\^acher tr\`es l\'eg\`erement cette condition,
en utilisant la proposition~\ref{prop-constr} dans toute sa g\'en\'eralit\'e,
plut\^ot que la proposition~\ref{prop-constr-faible} qui en est un cas
particulier.
Ceci n'apporterait toutefois que peu de marge de man{\oe}uvre
suppl\'ementaire, il resterait une condition de cardinalit\'e
sur le nombre de points de la courbe.
S'affranchir d'une telle condition, avec notamment pour objectif
la possibilit\'e de couvrir le cas o\`u $q$ est petit, semble encore
un probl\`eme ouvert dans notre approche.

Remarquons enfin que, m\^eme pour $q$ grand, cette condition
$|X(\F_q)|>5g$ n'est
pas anodine dans la recherche d'une bonne famille de courbes~;
en effet, par l'interm\'ediaire de la borne de Drinfeld-Vladut
g\'en\'eralis\'ee (\cite{Serre}\cite{Tsfasman}), elle impose aussi des
contraintes sur le nombre de points de degr\'e sup\'erieur.
Illustrons ceci par un exemple.
Pour toute courbe $X$ sur $\F_q$, notons $N_m(X/\F_q)$ le nombre de points
ferm\'es de degr\'e $m$ de $X$, de sorte que pour tout $n$ on a
\beq
|X(\F_{q^n})|=\sum_{m|n}mN_m(X/\F_q).
\eeq
Si dans la proposition~\ref{CO+} on se restreint \`a ne consid\'erer
que des points de degr\'e $1$ ou $2$, et sans multiplicit\'es ($u_i=1$),
on trouve (voir aussi \cite{BR0})~:
s'il existe une courbe $X$ de genre $g$ sur $\F_q$ telle que~:
\begin{enumerate}[(i)]
\item $N_k(X/\F_q)>0\;$ (c'est le cas par exemple si $2g+1\leq q^{(k-1)/2}(q^{1/2}-1)$)
\item $N_1(X/\F_q)>5g$
\item $N_1(X/\F_q)+2N_2(X/\F_q)\geq2k+g-1$
\end{enumerate}
alors
\beq
\mu_q(k)\leq N_1(X/\F_q)+3N_2(X/\F_q).
\eeq
Or la borne de Drinfeld-Vladut g\'en\'eralis\'ee implique, pour toute
famille de courbes de genre tendant vers l'infini~:
\beq
\limsup\frac{1}{g}\left(\frac{N_1(X/\F_q)}{\sqrt{q}-1}+\frac{2N_2(X/\F_q)}{q-1}\right)\leq 1
\eeq
de sorte que la condition (ii) implique que, dans (iii), on aura au mieux~:
\beq
\limsup\frac{1}{g}\left(N_1(X/\F_q)+2N_2(X/\F_q)\right)\leq q-5\sqrt{q}-1.
\eeq
Cela interdit notamment d'utiliser des familles de courbes optimales,
pour lesquelles $\frac{1}{g}2N_2(X/\F_q)\longto q-1$.

\end{document}